\theoremstyle{plain}
\newtheorem{theorem}{Theorem}
\newtheorem{lemma}[theorem]{Lemma}
\newtheorem{corollary}[theorem]{Corollary}
\newtheorem{proposition}[theorem]{Proposition}
\newtheorem{conjecture}[theorem]{Conjecture}
\newtheorem{definition}[theorem]{Definition}
\theoremstyle{definition}
\newtheorem{example}[theorem]{Example}
\newtheorem{remark}[theorem]{Remark}
\theoremstyle{remark}
\newcommand{\cB}{{\mathcal B}}
\newcommand{\cF}{{\mathcal F}}
\newcommand{\cI}{{\mathcal I}}
\newcommand{\cP}{{\mathcal P}}
\newcommand{\cX}{{\mathcal X}}
\newcommand{\bbE}{\mathbb{E}}
\newcommand{\bbP}{\mathbb{P}}
\newcommand{\bbR}{\mathbb{R}}
\newcommand{\bbZ}{\mathbb{Z}}
\newcommand{\eee}{{\rm e}}
\newcommand{\Var}{\mathop{\mathrm{Var}}\nolimits}
\DeclareMathOperator*{\argmax} {\arg\,max}
\let \@fnsymbol\@arabic
\begin{document}

\title{Random tessellations associated with max-stable random fields}
\author{Cl\'ement Dombry\thanks{Universit\'e de Franche-Comt\'e, Laboratoire de Math\'ematiques de Besan\c con, UMR CNRS 6623, 16 route de Gray,
25030 Besan\c con cedex, France.  Email: \url{clement.dombry@univ-fcomte.fr}.  Webpage: \url{http://cdombry.perso.math.cnrs.fr}} \;  and \; Zakhar Kabluchko\footnote{Universit\"at M\"unster, Institut f\"ur Mathematische Statistik, Orl\'{e}ans-Ring 10, 48149 M\"unster, Germany. Email: \url{zakhar.kabluchko@uni-muenster.de}}
}
\maketitle
\date{}
\abstract{With any max-stable random process $\eta$ on $\mathcal{X}=\bbZ^d$ or $\bbR^d$,  we associate a random tessellation of the parameter space $\mathcal{X}$.
The construction relies on the Poisson point process representation of the max-stable process $\eta$ which is seen as the pointwise maximum of a random collection of functions $\Phi=\{\phi_i, i\geq 1\}$. The tessellation is constructed as follows: two points $x,y\in \mathcal{X}$ are in the same cell if and only if there exists a function $\phi\in\Phi$ that realizes
the maximum $\eta$ at both points $x$ and $y$, i.e.\ $\phi(x)=\eta(x)$ and $\phi(y)=\eta(y)$. We characterize the distribution of cells in terms of coverage and inclusion probabilities. Most interesting is the stationary case where the asymptotic properties of the cells are strongly related to the ergodic and mixing properties of the max-stable process $\eta$ and to its conservative/dissipative and positive/null decompositions.}

\vspace{0.5cm}
\noindent
{\bf Key words:} max-stable random field, random tessellation, non-singular flow representation, ergodic properties.

\noindent
{\bf AMS Subject classification. Primary: 60G70}  {\bf Secondary: 60D05, 60G52, 60G60, 60G55, 60G10, 37A10, 37A25}.

\section{Introduction}

Max-stable random fields provide popular and meaningful models for spatial extremes, see, e.g.,\ de Haan and Ferreira \cite{dHF06}. The reason is that they appear as the only possible non-degenerate limits for normalized pointwise maxima of independent and identically distributed random fields. The one-dimensional marginal distributions of max-stable fields belong to the parametric class of Generalized Extreme Value distributions. Being interested mostly in the dependence structure, we will restrict our attention to max-stable fields $\eta=(\eta(x))_{x\in\cX}$ on $\cX\subset\bbR^d$ with standard unit Fr\'echet margins, i.e.\ satisfying
\begin{equation}\label{eq:frechet_standard}
\bbP[\eta(x)\leq z]=\exp(-1/z)\quad \mbox{for all } x\in\cX \mbox{ and } z>0.
\end{equation}
The max-stability property has then the simple form
\[
n^{-1}\bigvee_{i=1}^n \eta_i\stackrel{d}{=}\eta \quad \mbox{for all } n\geq 1,
\]
where $(\eta_i)_{1\leq i\leq n}$ are i.i.d.\ copies of $\eta$, $\bigvee$ is the pointwise maximum, and $\stackrel{d}{=}$ denotes the equality of finite-dimensional distributions.

A fundamental tool in the study of max-stable processes is their spectral representation (see, e.g.,\ de Haan \cite{dH84}, Gin\'e {\it et al.}\ \cite{GHV90}):
any stochastically continuous max-stable process $\eta$ can be written as
\begin{equation}\label{eq:deHaan}
\eta(x)=\bigvee_{i\geq 1} U_iY_i(x),\quad x\in \mathcal{X},
\end{equation}
where
\begin{itemize}
\item[-] $(U_i)_{i\geq 1}$ is the decreasing enumeration of the points of a Poisson point process on $(0,+\infty)$ with intensity measure $u^{-2}\mathrm{d}u$,
\item[-] $(Y_i)_{i\geq 1}$ are i.i.d.\ copies of a non-negative stochastic process $Y$ on $ \mathcal{X}$ such that $\bbE[Y(x)]=1$ for all $x\in \mathcal{X}$,
\item[-] the sequences $(U_i)_{i\geq 1}$ and $(Y_i)_{i\geq 1}$ are independent.
\end{itemize}
In this paper, we focus on max-stable random fields defined on $\mathcal{X}=\bbZ^d$ or $\bbR^d$.  In the case $\mathcal{X}=\bbR^d$ we always assume that $\eta$ has continuous sample paths. Equivalently, the spectral process $Y$ has continuous sample paths and
\begin{equation}\label{eq:deHaan2}
\bbE\Big[\sup_{x\in K} Y(x)\Big]<\infty \mbox{ for every compact set } K\subset \bbR^d.
\end{equation}
Note that the equivalence follows for instance from de Haan and Ferreira \cite[Corollary 9.4.5]{dHF06}.

Representation \eqref{eq:deHaan} has a nice interpretation pointed out by Smith \cite{S90} and Schlather \cite{S02}. In the context of a rainfall model, we can interpret each index $i\geq 1$ as a {\it storm event}, where $U_i$ stands for the intensity of the storm and $Y_i$ stands for its shape; then $U_iY_i(x)$ represents the amount of precipitation  due to the storm event $i$ at
point $x\in\cX$, and $\eta(x)$ is the maximal precipitation over all storm events at this point. This interpretation raises a natural question: what is the shape of the region
$C_i\subset \cX$ where the storm $i$ is extremal?  More formally, we define the {\it cell} associated with the storm event $i\geq 1$  by
$C_i=\{x\in \mathcal{X};\ U_iY_i(x)=\eta(x)\}$. It is a (possibly empty) {\it random closed subset} of $\cX$ and each point $x\in\mathcal{X}$ belongs almost surely to a unique cell (the point process $\{U_iY_i(x)\}_{i\geq 1}$ is a Poisson point process with intensity $u^{-2}\mathrm{d}u$ so that the maximum $\eta(x)$ is almost surely attained for unique $i$).

A drawback of this approach is that the distribution of the cell $C_i$ depends on the specific representation \eqref{eq:deHaan}.
For instance, with the convention that the sequence $(U_i)_{i\geq 1}$ is decreasing,  the cell $C_1$ is stochastically larger than the other cells. To avoid this, we introduce a canonical way to define the tessellation.
\begin{definition}
For $x\in \mathcal{X}$, the cell of $x$ is the random closed subset
\begin{equation}\label{eq:defCx}
C(x)=\{y\in \mathcal{X};\ \exists i\geq 1,\ U_iY_i(x)=\eta(x)\ \mbox{and}\ U_iY_i(y)=\eta(y)\}.
\end{equation}
\end{definition}
\noindent
The cell $C(x)$ is non-empty since it contains $x$. In the case $\cX=\bbZ^d$, for any two points $x_1,x_2\in\bbZ^d$, the cells $C(x_1)$ and $C(x_2)$ are almost surely either equal or disjoint. In the case $\cX=\bbR^d$, for any two points $x_1,x_2\in\bbR^d$, the cells $C(x_1)$ and $C(x_2)$ are almost surely either equal or have disjoint interiors.

The purpose of this paper is to study some properties of the {\it random tessellation} $(C(x))_{x\in \mathcal{X}}$. Let us stress  that in this paper the terms {\it cell} and {\it tessellation} are meant in a broader sense than in stochastic geometry where they originated. Here, a cell is a general (not necessarily convex or connected) random closed set and a tessellation is a random covering of $\cX$ by closed sets with pairwise disjoint interiors. The following lemma provides a first simple but important observation.
\begin{lemma}\label{lem0}
The distribution of the tessellation $(C(x))_{x\in \mathcal{X}}$ depends on the distribution of the max-stable  process $\eta$ only and  not  on the specific representation \eqref{eq:deHaan}.
\end{lemma}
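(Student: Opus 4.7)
The plan is to push the dependence on the representation into the Poisson point process $\Phi := \sum_{i \geq 1} \delta_{U_i Y_i}$ of spectral functions, and then show that the law of $\Phi$ itself is determined by the law of $\eta$. Let $F$ denote a suitable Polish function space --- $[0,\infty)^{\bbZ^d}$ with the product topology in the discrete case, or $C(\bbR^d,[0,\infty))$ with the topology of uniform convergence on compact sets in the continuous case. Then $\Phi$ is a Poisson point process on $F\setminus\{0\}$ with intensity measure
\[
\mu(A)=\int_0^\infty \bbP[uY\in A]\,u^{-2}\,\mathrm{d}u,
\]
and the key observation is that both $\eta$ and the tessellation are deterministic measurable functionals of $\Phi$, namely $\eta(x) = \sup_{f\in\Phi}f(x)$ and
\[
C(x) = \{y \in \cX : \exists\, f\in\Phi \text{ with } f(x)=\eta(x) \text{ and } f(y)=\eta(y)\}.
\]
It therefore suffices to prove that $\mu$ depends only on the law of $\eta$.

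For any $x_1,\ldots,x_k\in\cX$ and $z_1,\ldots,z_k>0$, the void-probability formula for Poisson point processes gives
\[
\exp\bigl(-\mu(A_{\vec{x},\vec{z}})\bigr)=\bbP[\eta(x_1)\leq z_1,\ldots,\eta(x_k)\leq z_k],
\]
where $A_{\vec{x},\vec{z}}=\{f\in F:f(x_j)>z_j\text{ for some }j\}$, so $\mu$ is determined on all such cylinder sets. By inclusion--exclusion the values $\mu\bigl(\{f:f(x_j)>z_j\text{ for all }j\}\bigr)$ are also determined; these sets form a $\pi$-system on which $\mu$ is $\sigma$-finite and which generates the Borel $\sigma$-algebra of $F\setminus\{0\}$. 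A standard monotone class / Dynkin argument then pins down $\mu$ completely, and the lemma follows.

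The main obstacle is purely technical: the measure $\mu$ is infinite (it accumulates near the null function), so the monotone class argument has to be run on subclasses bounded away from $0$, such as $\{f:\sup_{x\in K}f(x)>\varepsilon\}$ for compact $K\subset\cX$ and $\varepsilon>0$; on these $\mu$ is finite thanks to the integrability condition \eqref{eq:deHaan2}. Checking the measurability of $C(x)$ as a functional of $\Phi$ is then routine in the Polish framework, relying on the fact that for fixed $x$ the maximum $\eta(x)$ is almost surely attained at a unique atom of $\Phi$.
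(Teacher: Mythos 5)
Your proposal is correct and follows essentially the same route as the paper: reduce everything to the Poisson point process $\Phi$ of spectral functions, note that the tessellation is a measurable functional of $\Phi$, and identify the intensity $\mu$ from the finite-dimensional distributions of $\eta$ via void probabilities. You merely spell out the measure-theoretic uniqueness argument (inclusion--exclusion, $\pi$-system, $\sigma$-finiteness away from the zero function) that the paper leaves implicit.
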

\noindent
To prove the lemma, introduce the functional point process (which will play a key role in the sequel)
\begin{equation}\label{eq:phi}
 \Phi=\{\phi_i,\ i\geq 1\} \quad \mbox{where}\ \phi_i=U_iY_i, \ i\geq 1.
\end{equation}
Note that $\phi_i$ are elements of $\cF_0=\cF(\mathcal{X},[0,+\infty))\setminus \{0\}$, the set of non-negative and continuous functions on $\mathcal{X}$ excluding the zero function.  (We may assume without loss of generality that $Y$ does not vanish identically). The set $\cF_0$ is endowed with the $\sigma$-algebra generated by the coordinate mappings.
It follows from the transformation theorem that $\Phi$ is a Poisson point process  on $\cF_0$ with intensity measure $\mu$ given by
\begin{equation}\label{eq:mu}
\mu(A)=\int_0^\infty \bbP[uY\in A]u^{-2}\mathrm{d}u,\quad A\subset\cF_0 \mbox{ Borel}.
\end{equation}
The measure $\mu$ is called  the exponent measure or max-L\'evy measure and is related to the multivariate cumulative distribution functions of $\eta$ by
\begin{eqnarray*}
\lefteqn{\bbP[\eta(x_j)\leq z_j, j=1,\ldots,n]}\\
&=&\exp\left(-\mu(\{f\in\cF_0; \ f(x_j)>z_j \mbox{ for some } j=1,\ldots,n\})\right)
\end{eqnarray*}
for all $n\geq 1$, $x_1,\ldots,x_n\in\mathcal{X}$ and $z_1,\ldots,z_n>0$. In particular, this shows that $\mu$  depends on the distribution of $\eta$ only
and does not depend on the specific representation \eqref{eq:deHaan}. Now, Lemma \ref{lem0} follows easily since the tessellation $(C(x))_{x\in \mathcal{X}}$ is a functional of the Poisson point process $\Phi$ with intensity $\mu$.

The aim of this paper is to study some properties of the tessellation $(C(x))_{x\in \mathcal{X}}$ and to relate them to the properties of the max-stable random field $(\eta(x))_{x\in\cX}$.
It is worth noting that some well-known tessellations like the Laguerre and  some Johnson--Mehl tessellations (see, e.g.,\ M{\o}ller \cite{mol1992}) are particular cases of this setting (see Examples \ref{ex:1} and \ref{ex:1bis} below). Furthermore, thanks to the Poisson point process representation  by Gin\'e {\it et al.} \cite{GHV90}, the results from the present paper could presumably be extended to the more general framework of upper semi-continuous max-infinitely divisible processes. The connection with stochastic geometry would even be  stronger via the notion of hypograph: the hypograph of an upper semi-continuous max-infinitely divisible process can be represented as the union of random closed sets from a Poisson point process.   However, for the sake of simplicity, we consider only the case of continuous max-stable processes for which more results are available from the literature.

The paper is structured as follows. In Section 2, we study the law of the cell $C(x)$ and provide some formulas for the inclusion and coverage probabilities as well as some examples.
In Section 3, we focus on the stationary case and establish strong connections between asymptotic properties of $C(0)$ and properties of the max-stable random field $\eta$ such as ergodicity, mixing and decompositions of the  non-singular flow associated with $\eta$. Theorem \ref{theo2} relates the boundedness of the cell to the conservative/dissipative decomposition.
Theorem \ref{theo3} links the asymptotic density of the cell with the positive/null decomposition. Proofs are collected in Sections 4 and 5.

\section{Basic properties and examples}
\subsection{Basic properties}
Our first result is a simple characterization of the distribution of the cells of the tessellation.
\begin{theorem}\label{theo1}
Consider a sample continuous max-stable random field $\eta$ given by representation \eqref{eq:deHaan}.
For every $x\in \mathcal{X}$ and every measurable set $K\subset \mathcal{X}$,
\begin{equation}\label{eq:theo1.1}
\bbP[K\subset C(x)]=\bbE\left[ \inf_{y\in K\cup \{x\}}\frac{Y(y)}{\eta(y)}\right]
\end{equation}
and
\begin{equation}\label{eq:theo1.2}
\bbP[C(x)\subset K]=\bbE\left[ \left( \frac{Y(x)}{\eta(x)} - \sup_{y\in K^c}\frac{Y(y)}{\eta(y)}\right)^+\right],
\end{equation}
where $Y$ is independent of $\eta$, $K^c= \mathcal{X}\setminus K$ is the complement of the  set $K$, and $(z)^+=\max(z,0)$ is the positive part of $z$.
\end{theorem}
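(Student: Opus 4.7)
The plan is to reduce both identities to a direct computation via the Slivnyak--Mecke formula applied to the point process $\Phi$ of \eqref{eq:phi} with intensity measure $\mu$ given by \eqref{eq:mu}.

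I would first observe that almost surely there is a unique $\phi^*\in\Phi$ with $\phi^*(x)=\eta(x)$: the image $\{\phi(x):\phi\in\Phi\}$ is a Poisson point process on $(0,\infty)$ with intensity $z^{-2}dz$ and hence attains its maximum at a unique atom. With this notation, $\{K\subset C(x)\}$ equals $\{\phi^*(y)\geq\phi'(y)\text{ for all }\phi'\in\Phi\setminus\{\phi^*\}\text{ and all }y\in K\cup\{x\}\}$, and $\{C(x)\subset K\}$ equals $\{\phi^*(x)=\eta(x)\}\cap\{\phi^*(y)<\max_{\phi'\in\Phi\setminus\{\phi^*\}}\phi'(y)\text{ for all }y\in K^c\}$. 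Each of these events can thus be written as the expectation of $\sum_{\phi\in\Phi}\mathbf{1}_{A(\phi,\Phi\setminus\{\phi\})}$, a sum with at most one non-zero term thanks to uniqueness.

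Next, I would invoke Campbell--Mecke: since for a Poisson point process the reduced Palm distribution at $\phi$ is simply the law of $\Phi$, one has
\[
\bbE\Big[\sum_{\phi\in\Phi}\mathbf{1}_{A(\phi,\Phi\setminus\{\phi\})}\Big]=\int_{\cF_0}\bbP[A(\phi,\Phi)]\,\mu(d\phi),
\]
with $\phi$ independent of $\Phi$ on the right and $\eta=\max_{\phi'\in\Phi}\phi'$ keeping its original law. For \eqref{eq:theo1.1} the condition $A$ then reads $\phi(y)\geq\eta(y)$ for all $y\in K\cup\{x\}$; for \eqref{eq:theo1.2} it reads $\phi(x)\geq\eta(x)$ together with $\phi(y)<\eta(y)$ for all $y\in K^c$.

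Finally, I would disintegrate $\mu$ via \eqref{eq:mu} by writing $\phi=uY$ with $u\in(0,\infty)$ against $u^{-2}du$ and $Y$ an independent copy of the spectral process. Conditionally on $(Y,\eta)$, the first condition becomes $u\geq\sup_{y\in K\cup\{x\}}\eta(y)/Y(y)$, and integrating $u^{-2}du$ over this half-line yields $\inf_{y\in K\cup\{x\}}Y(y)/\eta(y)$, giving \eqref{eq:theo1.1} after taking expectation in $(Y,\eta)$. The second condition becomes $u\in\bigl[\eta(x)/Y(x),\,\inf_{y\in K^c}\eta(y)/Y(y)\bigr)$, and the $u^{-2}$-integral over this (possibly empty) interval equals $\bigl(Y(x)/\eta(x)-\sup_{y\in K^c}Y(y)/\eta(y)\bigr)^+$, giving \eqref{eq:theo1.2}. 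The main subtlety is the initial uniqueness/measurability step (together with the convention $\eta(y)/Y(y)=+\infty$ when $Y(y)=0$, and the measurability of the $\inf$/$\sup$ over a general measurable $K$, which in the continuous-path case reduces to a countable dense subset); everything else is a routine one-line integration of $u^{-2}$.
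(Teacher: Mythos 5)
Your proposal is correct and follows essentially the same route as the paper: rewrite the events $\{K\subset C(x)\}$ and $\{C(x)\subset K\}$ as a sum over the Poisson point process $\Phi$ of indicators with at most one nonzero term, apply the Slivnyak--Mecke formula, and then disintegrate $\mu$ via \eqref{eq:mu} and integrate $u^{-2}\mathrm{d}u$ over the relevant half-line or interval. The only cosmetic difference is that you phrase the events through the a.s.\ unique maximizer $\phi^*$ at $x$, whereas the paper works directly with the disjoint events $\{\phi_i>_{K\cup\{x\}}m_i\}$ modulo null sets; the computations coincide.
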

It is well-known that the distribution of a random closed set $C\subset \mathcal{X}$ is completely determined by its capacity functional
\[
\mathcal{X}_{C}(K)=\bbP[C\cap K\neq \emptyset],\quad K\subset \mathcal{X}\ \mbox{compact},
\]
see, e.g.,\ Molchanov \cite[Chapter 1]{M05}. Clearly, Theorem~\ref{theo1} implies that for all $x\in\mathcal{X}$ the capacity functional of the cell $C(x)$ is given by
\begin{eqnarray*}
\mathcal{X}_{C(x)}(K)
&=&1-\bbE\left[ \left( \frac{Y(x)}{\eta(x)} - \sup_{y\in K}\frac{Y(y)}{\eta(y)}\right)^+\right].
\end{eqnarray*}

\begin{remark}{\rm
It is worth noting that Weintraub \cite{W91} introduced  (with a different terminology) the probability that two points $x$ and $y$ are in the same cell
as a measure of dependence between $\eta(x)$ and $\eta(y)$.  More precisely, he considered
\[
\pi(x,y)=\bbP[y\in C(x)]=\bbE\left[\frac{Y(x)}{\eta(x)}\wedge \frac{Y(y)}{\eta(y)}\right],\quad x,y\in \mathcal{X}.
\]
Clearly, $\pi(x,y)\in [0,1]$.
One can prove easily that $\pi(x,y)=0$ holds if and only if $\eta(x)$ and $\eta(y)$ are independent, while $\pi(x,y)=1$ if and only if $\eta(x)=\eta(y)$ almost surely.
Moreover, $\pi(x,y)$ can be compared to  the extremal coefficient $\theta(x,y)$ which is another well-known measure of dependence for max-stable processes defined by
\begin{equation}\label{eq:theta}
\theta(x,y)=-\log \bbP[\eta(x)\vee\eta(y)\leq 1] \in [1,2].
\end{equation}
According to Stoev \cite[Proposition 5.1]{S08}, we have
\begin{equation}\label{eq:comparison}
\frac{1}{2}(2-\theta(x,y))\leq \pi(x,y)\leq 2(2-\theta(x,y)).
\end{equation}
In the case of stationary max-stable random fields, we use the notation $\theta(h)=\theta(0,h)$ and $\pi(h)=\pi(0,h)$.
}
\end{remark}

As a by-product of Theorem \ref{theo1}, we can provide an explicit expression for the mean volume of the cells.
Denote by $\lambda$ the discrete counting measure when $\mathcal{X}=\bbZ^d$ or the Lebesgue measure when $\mathcal{X}=\bbR^d$. The volume of $C(x)$ is defined by
$\mathrm{Vol}(C(x))=\lambda(C(x))$. In the discrete case, $\mathrm{Vol}(C(x))$ is the cardinality of $C(x)$.
\begin{corollary}\label{cor1.1}
Let $x\in \mathcal{X}$. The cell $C(x)$ has expected volume
\[
\bbE[\mathrm{Vol}(C(x))]=\int_\mathcal{X} \bbE\left[ \frac{Y(x)}{\eta(x)}\wedge \frac{Y(y)}{\eta(y)}\right]\lambda(\mathrm{d}y).
\]
\end{corollary}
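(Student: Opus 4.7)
The plan is to derive Corollary \ref{cor1.1} directly from Theorem \ref{theo1} by writing the volume as an integral of an indicator and applying Fubini's theorem.

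First, I would write
\[
\mathrm{Vol}(C(x)) = \lambda(C(x)) = \int_{\mathcal{X}} \mathbf{1}_{\{y \in C(x)\}}\, \lambda(\mathrm{d}y),
\]
which is valid by the definition of $\lambda$ in both the discrete and the continuous case. Taking expectations and exchanging the integral with the expectation via Fubini--Tonelli (everything is non-negative, so this is immediate modulo measurability) yields
\[
\bbE[\mathrm{Vol}(C(x))] = \int_{\mathcal{X}} \bbP[y \in C(x)]\, \lambda(\mathrm{d}y).
\]

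Next, I would apply Theorem \ref{theo1}, equation \eqref{eq:theo1.1}, with the singleton $K = \{y\}$. The event $\{y \in C(x)\}$ coincides with $\{K \subset C(x)\}$, so
\[
\bbP[y \in C(x)] = \bbE\left[\inf_{z \in \{x,y\}} \frac{Y(z)}{\eta(z)}\right] = \bbE\left[\frac{Y(x)}{\eta(x)} \wedge \frac{Y(y)}{\eta(y)}\right].
\]
Plugging this into the previous display gives exactly the claimed formula.

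The only mildly delicate point is the joint measurability of $(y,\omega) \mapsto \mathbf{1}_{\{y \in C(x)(\omega)\}}$ needed to invoke Fubini. In the discrete case $\mathcal{X} = \bbZ^d$ there is nothing to check since the integral is a countable sum. In the continuous case, measurability follows from $C(x)$ being a random closed set (its graph is measurable in the product $\sigma$-algebra, see, e.g., Molchanov \cite[Chapter 1]{M05}), so no real obstacle arises. Thus the corollary reduces to a one-line application of Fubini together with the $K=\{y\}$ case of Theorem \ref{theo1}.
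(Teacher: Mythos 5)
Your proof is correct and follows essentially the same route as the paper: write the volume as $\int_{\mathcal{X}} \mathbf{1}_{\{y\in C(x)\}}\,\lambda(\mathrm{d}y)$, apply Fubini--Tonelli, and invoke \eqref{eq:theo1.1} with $K=\{y\}$. The extra remark on joint measurability is a reasonable addition but not needed beyond what the paper takes for granted.
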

\noindent
This together with \eqref{eq:comparison} implies that the cell $C(x)$ has finite expected volume if and only if
$\int_\mathcal{X} (2-\theta(x,y))\lambda(\mathrm{d}y)<+\infty$. Another consequence of Theorem \ref{theo1} is an expression
for the probability that the cell $C(x)$ is bounded.
\begin{corollary}\label{cor1.2}
Let $x\in \mathcal{X}$. The cell $C(x)$ is bounded with probability
\[
\bbP[C(x)\ \mbox{is bounded}]=\bbE\left[ \left( \frac{Y(x)}{\eta(x)} - \limsup_{y\to \infty}\frac{Y(y)}{\eta(y)}\right)^+\right].
\]
Furthermore, the following statements are equivalent:
\begin{enumerate}
\item[i)] the cell $C(x)$ is bounded a.s.;
\item[ii)]  $\lim_{y\to\infty}\frac{Y(y)}{\eta(y)}= 0$ a.e.\ on the event $\{Y(x)\neq 0\}$.
\end{enumerate}
\end{corollary}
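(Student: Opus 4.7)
The plan is to apply formula \eqref{eq:theo1.2} with $K$ taken to be the closed ball $B_R(0)$ of radius $R$ in $\mathcal{X}$, and then let $R\to\infty$. The event that $C(x)$ is bounded will be rewritten as a monotone union of inclusion events, and monotone convergence will take care of the interchange of limit and expectation. The equivalence i)$\Leftrightarrow$ii) then drops out of the resulting formula after noting the normalization identity $\bbE[Y(x)/\eta(x)]=1$.

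More concretely, since $\{C(x)\text{ is bounded}\}=\bigcup_{R>0}\{C(x)\subset B_R(0)\}$ is an increasing union of events, continuity of probability combined with \eqref{eq:theo1.2} yields
\[
\bbP[C(x)\text{ is bounded}]=\lim_{R\to\infty}\bbE\!\left[\left(\frac{Y(x)}{\eta(x)}-\sup_{y\in B_R(0)^c}\frac{Y(y)}{\eta(y)}\right)^{\!+}\right].
\]
As $R$ grows, $\sup_{y\in B_R(0)^c} Y(y)/\eta(y)$ is non-increasing with pointwise limit $\limsup_{y\to\infty} Y(y)/\eta(y)$, so the integrand is non-decreasing in $R$, and the monotone convergence theorem delivers the displayed formula of the corollary.

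For the equivalence i)$\Leftrightarrow$ii), applying \eqref{eq:theo1.1} with $K=\emptyset$ together with the trivial fact $\bbP[\emptyset\subset C(x)]=1$ gives $\bbE[Y(x)/\eta(x)]=1$. Hence $\bbP[C(x)\text{ is bounded}]=1$ if and only if the integrand in the displayed formula coincides almost surely with $Y(x)/\eta(x)$. For non-negative reals $a,b$, the identity $(a-b)^+=a$ holds precisely when $a=0$ or $b=0$; applied pointwise with $a=Y(x)/\eta(x)$ and $b=\limsup_{y\to\infty} Y(y)/\eta(y)$, and using that $\eta(x)>0$ almost surely, this is exactly the statement that $\limsup_{y\to\infty} Y(y)/\eta(y)=0$ a.e.\ on $\{Y(x)\neq 0\}$, and since $Y/\eta\geq 0$ this coincides with $\lim_{y\to\infty} Y(y)/\eta(y)=0$ a.e.\ on $\{Y(x)\neq 0\}$. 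No real obstacle is anticipated: the whole argument is a monotone-convergence passage to the limit followed by an elementary pointwise dichotomy.
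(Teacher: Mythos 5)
Your argument is correct and follows essentially the same route as the paper: write boundedness as an increasing union of inclusion events, apply \eqref{eq:theo1.2}, pass to the limit by monotone convergence, and then use the pointwise dichotomy $(a-b)^+=a \Leftrightarrow a=0$ or $b=0$ together with $\bbE[Y(x)/\eta(x)]=1$. The only cosmetic differences are your use of balls instead of the boxes $B_n$ and your derivation of the normalization $\bbE[Y(x)/\eta(x)]=1$ from \eqref{eq:theo1.1} with $K=\emptyset$ rather than from the independence of $Y(x)$ and $1/\eta(x)\sim\mathrm{Exp}(1)$; both are immaterial.
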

\begin{remark}
In the case when the max-stable process $\eta$ is stationary, we will see in Section \ref{subsec:boundedness} below that condition ii) can be replaced by the following one: $Y(y)\to 0$ a.s.\ as $y\to \infty$.
\end{remark}

\subsection{Examples}
As an illustration and to get some intuition, we provide several examples. Simulations of the max-stable processes together with the associated tessellations are available
on the personal webpage of the first author.

\begin{example}\label{ex:1}{\rm
The isotropic Smith process \cite{S90} is defined by
\[
 \eta(x)=\bigvee_{i\geq 1} U_ih(x-X_i),\quad x\in\bbR^d,
\]
where $\{(U_i,X_i),i\geq 1\}$ is a Poisson point process on $(0,\infty)\times\bbR^d$ with intensity $u^{-2}\mathrm{d}u\mathrm{d}x$
and $h(x)=(2\pi)^{-d/2}\exp(-\|x\|^2/2)$ is the standard Gaussian $d$-variate density function. The Smith process is a stationary max-stable process that belongs to the class of {\it moving maximum processes}
and is hence mixing. Surprisingly, the associated tessellation is exactly the so-called {\it Laguerre tessellation} studied in great detail by Lautensack and Zuyev \cite{Z08}. Indeed, the cell $C_i$ is given by
\[
 C_i=\{x\in\bbR^d;\ \|x-X_i\|^2-2\ln(U_i) \leq \|x-X_j\|^2-2\ln(U_j),\ \ j\neq i \}.
\]
In this very specific example, the cells are convex bounded polygons.
}\end{example}

\begin{example}\label{ex:1bis}{\rm
Consider a moving maximum process of the same form as in the previous example, but with $h(x) = \exp\{-\|x\|/v\}$, $x\in\bbR^d$, where $v>0$ is a parameter.  Then, the cell $C_i$ is given by
\[
C_i=\{x\in\bbR^d;\ \|x-X_i\|/v - \ln(U_i) \leq \|x-X_j\|/v - \ln(U_j),\ \ j\neq i \},
\]
and we recover a special case of the \textit{Johnson--Mehl tessellation}; see M{\o}ller \cite{mol1992}. }
\end{example}

\begin{example}\label{ex:2}{\rm
The stationary extremal Gaussian process originally introduced by Schlather \cite{S02} corresponds to the case when the spectral process $Y$ in representation \eqref{eq:deHaan} is given by
\[
 Y(x)=\sqrt{\frac{\pi}{2}}\max(W(x),0),\quad x\in\mathbb{R}^d,
\]
where $W$ is a stationary Gaussian process on $\mathbb{R}^d$ with zero mean, unit variance and correlation function $\rho(h)=\bbE[W(0)W(h)]$, $h\in\mathbb{R}^d$. The  extremal coefficient function is given by
\[
 \theta(h)=2T_2\left[\sqrt{\frac{2}{1-\rho(h)^2}}-\sqrt{\frac{1-\rho(h)^2}{2}}\rho(h)\right],\quad h\in\mathbb{R}^d,
\]
where $T_2$ is the cumulative distribution function of a Student distribution with $2$ degrees of freedom. Typically, $\rho(h)\to 0$ as $h\to\infty$, so that  $\theta(h)\to 2T_2(\sqrt{2})<2$ and  $\eta$ is neither mixing nor ergodic (see Stoev \cite{S08} or Kabluchko and Schlather \cite{KS10}). The  inequalities in \eqref{eq:comparison} entail that
$\liminf \bbP[h\in C(0)]>0$ as $h\to\infty$. This suggests that the cells are not bounded which is consistent with the simulations available on the first author personal webpage. Note that the cells are neither convex nor connected and have a  pretty regular shape due to the particular choice of the correlation function $\rho(h)=\exp(-\|h\|^2/2)$ that yields smooth Gaussian sample paths.
}\end{example}

\begin{example}\label{ex:3}{\rm
Brown--Resnick processes \cite{KSdH09} form a flexible class of max-stable processes. They are given by  \eqref{eq:deHaan} with the spectral process of the form
 \[
  Y(x)=\exp\left(W(x) - \frac 12 \sigma^2(x)\right),\quad x\in\mathbb{R}^d,
 \]
where $W$ is a stationary increment centered Gaussian process on $\mathbb{R}^d$, and $\sigma^2(x)=\Var W(x)$.
Surprisingly, the process $\eta$ is stationary \cite{KSdH09}. Its distribution is completely characterized by the variogram
\[
\gamma(h) = \Var(W(x+h)-W(x)), \quad h\in\mathbb{R}^d.
\]
The  extremal coefficient function is given by (\cite[p.~2063]{KSdH09})
\[
 \theta(h)=2 G\left(\frac 12 \sqrt{\gamma(h)}\right),\quad h\in\mathbb{R}^d,
\]
where $G$ is the cumulative distribution function of the standard normal distribution. Typically, $\gamma(h)\to\infty$ as $h\to\infty$, so that  $\theta(h)\to 2$ and  $\eta$ is mixing \cite{S08,KS10}.
The inequalities in \eqref{eq:comparison} entail that $\lim \bbP[h\in C(0)]=0$ as $h\to\infty$ suggesting that the cells become asymptotically independent at large distances.
Since $1-G(u)\sim 1/(\sqrt{2\pi} u)\, \eee^{-u^2/2}$, $u\to+\infty$, Corollary \ref{cor1.1} implies that  the cell $C(0)$ has finite expected volume (and hence, is a.s.\ bounded) provided that the following condition is satisfied:
$$
\liminf_{h\to\infty} \frac{\gamma(h)}{\log \|h\|} > 8d.
$$
Simulations with the variogram $\gamma(h)=2\|h\|$ are available on the first author's personal webpage. They show that the cells may have a very rough shape, due to the particular choice of the variogram that yields rough Gaussian paths.
}\end{example}

\section{The stationary case:  asymptotic properties of cells}\label{sec:asy}
In the sequel, we focus on the case when $\eta$ is a \textit{stationary} sample continuous max-stable random field on $ \mathcal{X}=\bbZ^d$ or $\bbR^d$.
We show strong connections between the ergodic and mixing properties of the random field $\eta$, its conservative/dissipative and positive/null decompositions and
the geometry of the cells.

\subsection{Ergodic properties and geometry of the cells}\label{sec:CEP}
Ergodic and mixing properties of max-stable random fields have been studied intensively by Stoev \cite{S08,S10} and Kabluchko and Schlather \cite{KS10}.
A simple characterization using the extremal coefficient is known (see, e.g.,\ \cite[Theorems 1.1 and 1.2]{KS10} where the more general case of max-infinitely divisible processes is considered).
\begin{theorem}[Stoev (2008), Kabluchko and Schlather (2010)]\label{thm:SKS}\ \\
Let $\eta$ be a stationary max-stable random field on $\cX=\mathbb{Z}^d$ or $\mathbb{R}^d$.
\begin{itemize}
 \item[-] $\eta$ is ergodic if and only if $\theta(h)\to 2$ in Ces\`{a}ro mean as $h\to\infty$;
 \item[-] $\eta$ is mixing if and only if $\theta(h)\to 2$  as $h\to\infty$.
\end{itemize}
\end{theorem}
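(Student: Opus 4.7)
The plan is to translate ergodicity and mixing of $\eta$ into decay properties of the exponent measure $\mu$ on $\cF_0$, then reduce everything to the two-point functional $\theta$ via a union bound. Write $T_h:\cF_0\to\cF_0$ for the shift $(T_hf)(\cdot)=f(\cdot-h)$; by stationarity of $\eta$, $\mu$ is $T_h$-invariant. For finite configurations $x_1,\ldots,x_n$, $y_1,\ldots,y_m\in\cX$ and positive thresholds $z_i,w_j$, set $A=\{f:f(x_i)>z_i\text{ for some }i\}$ and $B=\{f:f(y_j)>w_j\text{ for some }j\}$. The Poisson structure of $\Phi$ gives
\[
\bbP\bigl[\eta(x_i)\le z_i\ \forall i,\ \eta(y_j+h)\le w_j\ \forall j\bigr]=\exp\bigl(-\mu(A\cup T_{-h}B)\bigr),
\]
and inclusion--exclusion yields $\mu(A\cap T_{-h}B)=\mu(A)+\mu(B)-\mu(A\cup T_{-h}B)$. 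Since cylinder events generate the Borel $\sigma$-algebra of $\eta$, a standard $L^2$-density argument (together with the mean ergodic theorem for the Ces\`aro case) reduces the proof to showing that $\eta$ is mixing iff $\mu(A\cap T_{-h}B)\to 0$ as $h\to\infty$ for all such cylinders $A,B$, and ergodic iff this holds in Ces\`aro mean.

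For the necessary direction, specialize to $n=m=1$, $x_1=y_1=0$, $z_1=w_1=1$. Then $\mu(A)=\mu(B)=1$, and
\[
\mu(A\cap T_{-h}B)=\mu\{f:f(0)>1,\ f(h)>1\}=2-\theta(h),
\]
so mixing (resp.\ ergodicity) of $\eta$ forces $\theta(h)\to 2$ pointwise (resp.\ in Ces\`aro mean). For the sufficient direction, I would exploit the scaling $\mu(c\cdot E)=c^{-1}\mu(E)$ inherited from the $u^{-2}\mathrm{d}u$ intensity in \eqref{eq:mu}, together with the $T_h$-invariance of $\mu$. A union bound followed by the scaling identity then gives
\[
\mu(A\cap T_{-h}B)\le\sum_{i,j}\mu\{f:f(x_i)>z_i,\ f(y_j+h)>w_j\}\le\sum_{i,j}c_{ij}^{-1}\bigl(2-\theta(y_j-x_i+h)\bigr),
\]
where $c_{ij}=\min(z_i,w_j)$. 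Being a finite sum of translates of $2-\theta(\cdot)$, the right-hand side tends to $0$ as $h\to\infty$ under the mixing hypothesis and in Ces\`aro mean under the ergodicity hypothesis.

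The main obstacle is the reduction step in the first paragraph: one must carefully justify that the $L^2(\bbP)$-density of cylinder-event indicators, combined with the mean ergodic theorem, legitimately converts the abstract definitions of ergodicity and mixing into the decay of $\mu(A\cap T_{-h}B)$ on cylinder pairs. Once this functional-analytic reduction is in place, the remainder of the argument is a routine consequence of the Poisson point process representation and of the homogeneity and stationarity of the exponent measure.
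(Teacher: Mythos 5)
The paper does not actually prove Theorem~\ref{thm:SKS}: it imports it by citation from Stoev (2008) and Kabluchko--Schlather (2010), where the (harder, max-i.d.) version is established. Your argument is a correct self-contained reconstruction of essentially the standard proof from those references, and the key computations all check out: the identity $\mu\{f(0)>1,\,f(h)>1\}=2-\theta(h)$ follows from inclusion--exclusion and $\mu\{f(x)>1\}=\bbE[Y(x)]=1$; the homogeneity $\mu(cE)=c^{-1}\mu(E)$ follows from the $u^{-2}\,\mathrm{d}u$ intensity; and the union bound $\mu(A\cap T_{-h}B)\le\sum_{i,j}\min(z_i,w_j)^{-1}\bigl(2-\theta(y_j-x_i+h)\bigr)$ is valid, with the Ces\`aro case surviving translation because $0\le 2-\theta\le 1$ and $\lambda\bigl((B_r+v)\triangle B_r\bigr)/\lambda(B_r)\to0$. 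The ``main obstacle'' you flag is indeed standard, but to make it airtight you should name the three ingredients explicitly: (a) mixing and ergodicity need only be verified on a generating semi-algebra of cylinder events (Walters-type approximation, using that the shifts preserve $\bbP$); (b) the necessity of the Ces\`aro condition uses the multiparameter mean ergodic theorem for the F{\o}lner boxes $B_r$ (measurability of the $\bbR^d$-action being guaranteed by sample continuity); and (c) the passage between $\bbP[E\cap T^{-h}F]-\bbP[E]\bbP[F]=\bbP[E]\bbP[F]\bigl(\eee^{\mu(A\cap T_{-h}B)}-1\bigr)$ and the decay of $\mu(A\cap T_{-h}B)$ itself, in both pointwise and Ces\`aro senses, requires the uniform bound $\mu(A\cap T_{-h}B)\le\min(\mu(A),\mu(B))\le\sum_i z_i^{-1}<\infty$ so that $\eee^{x}-1$ and $x$ are two-sidedly comparable on the relevant range. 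With these points spelled out the proof is complete; it is more elementary than the cited references only because you exploit the Fr\'echet normalization \eqref{eq:frechet_standard}, whereas Kabluchko and Schlather work with general max-infinitely divisible processes.
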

Interestingly, these results can be reinterpreted in terms of the geometric properties of the tessellation. For $r>0$, we write $B_r=[-r,r]^d\cap\cX$. We equip $ \mathcal{X}$ with a measure $\lambda$ which is either the counting or the Lebesgue measure, when  $\mathcal{X}=\bbZ^d$ or $\mathcal{X}=\bbR^d$, respectively.
\begin{proposition}\label{prop1}
Let $\eta$ be a stationary, sample continuous max-stable random field on $\mathcal{X}=\bbZ^d$ or $\bbR^d$.
\begin{enumerate}
\item The following statements are equivalent:
\begin{enumerate}
\item[(1.a)] $\eta$ is ergodic,
\item[(1.b)] $\lim_{r\to+\infty} \bbE\Big[\frac{\lambda(C(0)\cap B_r)}{\lambda(B_r)}\Big]=0$.
\end{enumerate}
\item The following statements are equivalent:
\begin{enumerate}
\item[(2.a)] $\eta$ is mixing,
\item[(2.b)] $\lim_{x\to\infty} \bbP[x\in C(0)]=0$.
\end{enumerate}
\end{enumerate}
\end{proposition}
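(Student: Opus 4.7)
The strategy is to translate each statement about the geometry of $C(0)$ into a statement about the function $\pi(h)=\mathbb{P}[h\in C(0)]$, and then invoke the sandwich bound \eqref{eq:comparison} together with Theorem \ref{thm:SKS} to reduce everything to the behaviour of the extremal coefficient $\theta(h)$ as $h\to\infty$.

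First, I would rewrite the mean volume inside a ball using Fubini. Since $\lambda(C(0)\cap B_r)=\int_{B_r}\mathbf{1}_{\{y\in C(0)\}}\,\lambda(\mathrm{d}y)$ and the integrand is nonnegative and jointly measurable (the event $\{y\in C(0)\}$ is measurable in $(\omega,y)$ by the Poisson point process representation \eqref{eq:phi}), one obtains
\[
\mathbb{E}\Big[\tfrac{\lambda(C(0)\cap B_r)}{\lambda(B_r)}\Big]=\frac{1}{\lambda(B_r)}\int_{B_r}\pi(y)\,\lambda(\mathrm{d}y).
\]
Thus (1.b) is exactly the statement that $\pi(h)\to 0$ in Cesàro mean on the boxes $B_r$, and (2.b) is exactly the statement $\pi(h)\to 0$ as $h\to\infty$.

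Next, the comparison \eqref{eq:comparison}, which reads $\tfrac12(2-\theta(h))\le \pi(h)\le 2(2-\theta(h))$ in the stationary case, shows that $\pi$ and $2-\theta$ are comparable nonnegative functions on $\mathcal{X}$. Consequently $\pi(h)\to 0$ if and only if $\theta(h)\to 2$, and likewise
\[
\frac{1}{\lambda(B_r)}\int_{B_r}\pi(y)\,\lambda(\mathrm{d}y)\longrightarrow 0
\quad\Longleftrightarrow\quad
\frac{1}{\lambda(B_r)}\int_{B_r}(2-\theta(y))\,\lambda(\mathrm{d}y)\longrightarrow 0,
\]
i.e.\ $\theta(h)\to 2$ in Cesàro mean. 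Now Theorem \ref{thm:SKS} identifies the right-hand sides of these two equivalences with mixing and with ergodicity of $\eta$, respectively, which completes both parts.

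There is really no serious obstacle: the only slightly delicate point is justifying the Fubini step, where one needs joint measurability of $\{(y,\omega):y\in C(0)(\omega)\}$. This follows because $C(0)$ is defined as a deterministic measurable functional of the Poisson point process $\Phi$ on $\cF_0$ (as emphasized in the discussion after Lemma \ref{lem0}), so that the indicator $\mathbf{1}_{\{y\in C(0)\}}$ is measurable in $(y,\omega)$; once this is checked, the rest is a one-line application of the sandwich bound and the cited characterizations of ergodicity and mixing. I would also state explicitly that the notion of Cesàro mean in Theorem \ref{thm:SKS} is the same as averaging over the boxes $B_r$, so that the equivalence is literal and no subsequential argument is required.
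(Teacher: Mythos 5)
Your proposal is correct and follows essentially the same route as the paper: Fubini to identify $\bbE[\lambda(C(0)\cap B_r)/\lambda(B_r)]$ with the Ces\`aro average of $\pi(h)=\bbP[h\in C(0)]$, the sandwich bound \eqref{eq:comparison} to pass between $\pi(h)$ and $2-\theta(h)$, and Theorem \ref{thm:SKS} to conclude. The only difference is that you spell out the joint-measurability justification for the Fubini step, which the paper takes for granted.
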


Next we focus on strong mixing properties of max-stable processes, see Dombry and Eyi-Minko \cite{DEM12}.
The $\beta$-mixing coefficients of the random process $\eta$ are defined as follows: for disjoint closed subsets $S_1,S_2\subset \mathcal{X}$,
we define
\begin{eqnarray}\label{eq:beta}
\beta(S_1,S_2)= \sup\Big\{ |\cP_{S_1\cup S_2}(C)- (\cP_{S_1}\otimes \cP_{S_2})(C)|;\ C\in \cB_{S_1\cup S_2}\Big\},
\end{eqnarray}
where $\cP_S$ is the distribution (on the space $\bbR_+^S$) of the restriction of $\eta$ to the set $S$, and $\cB_S$ is the product $\sigma$-algebra on the space $\bbR_+^S$.  Given a closed subset $S\subset \mathcal{X}$ and $r>0$, we define
\[
\beta_r(S)=\beta(S,S_r^c)\mbox{ with }  S_r^c=\{x\in S; d(x,S)\geq r\}
\]
where $d(x,S)$ denotes the distance between the point $x$ and the set $S$.
We say that $\eta$ is strongly $\beta$-mixing if for all compact sets $S\subset \mathcal{X}$,
\[
\lim_{r\to+\infty} \beta_r(S)=0.
\]
\begin{proposition}\label{prop2}
If $\eta$ is a stationary max-stable random field such that $C(0)$ is  almost surely bounded, then  $\eta$ is strongly $\beta$-mixing.
\end{proposition}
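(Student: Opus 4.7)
The strategy is to apply the generic $\beta$-mixing estimate for max-infinitely divisible random fields of Dombry--Eyi-Minko \cite{DEM12} to the cell picture developed in Section 2. For disjoint closed sets $S_1,S_2\subset\cX$, that bound can be written in the form
$$
\beta(S_1,S_2)\;\leq\;c\,\bbP\bigl[\exists\,\phi\in\Phi,\,x_i\in S_i\colon \phi(x_i)=\eta(x_i),\ i=1,2\bigr]\;=\;c\,\bbP[\cC(S_1)\cap S_2\neq\emptyset],
$$
with $c$ a universal constant, where $\cC(S):=\bigcup_{x\in S}C(x)=\bigcup_{\phi\in\Phi^+_S}E_\phi$, $\Phi^+_S:=\{\phi\in\Phi:\exists x\in S,\,\phi(x)=\eta(x)\}$ and $E_\phi:=\{y\in\cX:\phi(y)=\eta(y)\}$. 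Taking $S_1=S$ compact and $S_2=S_r^c$, the events $\{\cC(S)\cap S_r^c\neq\emptyset\}$ decrease to $\{\cC(S)\text{ unbounded}\}$ as $r\to\infty$, reducing the proof to showing that $\cC(S)$ is almost surely bounded for every compact $S$.

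If $\cX=\bbZ^d$ this is immediate: $S$ is finite and $\cC(S)$ is a finite union of cells $C(x)$, $x\in S$, each a.s.\ bounded by stationarity and the hypothesis. If $\cX=\bbR^d$, a first step is to show that $\Phi^+_S$ is almost surely finite: any $\phi\in\Phi^+_S$ satisfies $\sup_S\phi\geq m_S:=\inf_S\eta$, sample continuity of $\eta$ gives $m_S>0$ a.s., and \eqref{eq:deHaan2} implies that for each $m>0$ the Poisson count $\#\{\phi\in\Phi:\sup_S\phi\geq m\}$ has finite mean $m^{-1}\bbE[\sup_S Y]$. Letting $m\downarrow 0$ along a countable sequence yields $\#\Phi^+_S<\infty$ almost surely, so $\cC(S)=\bigcup_{\phi\in\Phi^+_S}E_\phi$ is a.s.\ a finite union.

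The main obstacle is now to show that each extremal region $E_\phi$ for $\phi\in\Phi^+_S$ is a.s.\ bounded, since stationarity only gives boundedness of $C(x)$ at deterministic base points whereas $\phi\in\Phi^+_S$ is random. I would handle this by a Slivnyak--Mecke (reduced Palm) computation,
$$
\bbE\bigl[\#\{\phi\in\Phi^+_S:E_\phi\text{ unbounded}\}\bigr]=\int_{\cF_0}\bbP\bigl[\{y:\phi(y)\geq\eta'(y)\}\text{ meets }S\text{ and is unbounded}\bigr]\,d\mu(\phi),
$$
with $\eta'$ an independent copy of $\eta$. Under $\mu$ the function $\phi$ takes the form $uY$ with $Y$ a copy of the spectral process; for each $x$ in a countable dense subset of $\cX$, stationarity makes $C(x)$ a.s.\ bounded, and Corollary \ref{cor1.2}(ii) applied at $x$ gives $Y(y)/\eta'(y)\to 0$ almost surely on $\{Y(x)\neq 0\}$. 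A continuity argument combines these countably many statements: on $\{\sup_S\phi>0\}$, sample continuity of $Y$ puts some dense-sequence point $x_n$ in $\{Y>0\}$, so $\phi(y)/\eta'(y)\to 0$ a.s.\ and $\{y:\phi\geq\eta'\}$ is a.s.\ bounded; on $\{\sup_S\phi=0\}$ the inequality $\eta'\geq\inf_S\eta'>0$ on $S$ forces $\{y:\phi\geq\eta'\}$ to miss $S$. The integrand therefore vanishes for $\mu$-a.e.\ $\phi$, $\cC(S)$ is a.s.\ bounded, and $\beta_r(S)\to 0$ as claimed.
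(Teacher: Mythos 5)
Your argument is correct and follows the same skeleton as the paper's proof: the Dombry--Eyi-Minko bound $\beta(S_1,S_2)\leq 2\,\bbP[\bigcup_{s\in S_1}C(s)\cap S_2\neq\emptyset]$, the reduction to almost sure boundedness of $\bigcup_{s\in S}C(s)$, the trivial discrete case, and the a.s.\ finiteness of the set of functions of $\Phi$ that are extremal somewhere on a compact $S$ (which you reprove directly from \eqref{eq:deHaan2} rather than citing \cite[Proposition 2.3]{DEM13}). Where you genuinely diverge is the final step. The paper writes $\bigcup_{s\in K}C(s)=\bigcup_{j=1}^k C(s_j)$ and declares this bounded ``as a finite union of bounded sets,'' which tacitly uses boundedness of cells based at the \emph{random} points $s_j\in K$, while the hypothesis plus stationarity only give a.s.\ boundedness of $C(x)$ for each \emph{fixed} $x$. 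You name this issue explicitly and close it by a second application of the Slivnyak--Mecke formula, combined with Corollary \ref{cor1.2}(ii) applied along a countable dense set of base points and a continuity argument to show that $\mu$-almost every $f$ satisfies $f/\eta'\to 0$ whenever $f$ is not identically zero near $S$. This costs an extra Palm computation but yields a fully explicit justification of the one step the paper passes over quickly; both routes deliver the same conclusion $\beta_r(S)\to 0$.
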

We conjecture that the converse implication is also true:
\begin{conjecture}
If $\eta$ is a strongly $\beta$-mixing stationary max-stable random field, then $C(0)$ is  almost surely bounded.
\end{conjecture}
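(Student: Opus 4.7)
I would attack the conjecture by contraposition: assume $p := \bbP[C(0) \text{ is unbounded}] > 0$ and try to show that $\eta$ fails strong $\beta$-mixing. Writing $A_r := \{C(0) \cap B_r^c \neq \emptyset\}$, the events $A_r$ decrease in $r$ to $\{C(0) \text{ unbounded}\}$, so $\bbP[A_r] \geq p$ for every $r$. Combining Theorem \ref{theo1} with the identity $a - (a-b)^+ = a \wedge b$ (valid for $a,b \geq 0$, together with the easy fact $\bbE[Y(0)/\eta(0)] = 1$) yields the spectral witness
\[
\bbP[A_r] \;=\; \bbE\left[\frac{Y(0)}{\eta(0)} \wedge \sup_{y \in B_r^c} \frac{Y(y)}{\eta(y)}\right] \;\geq\; p.
\]
Uniformly in $r$, with probability at least $p$ the a.s.\ unique storm realizing the maximum at $0$ is also extremal somewhere arbitrarily far from $0$.

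The next step is to convert this persistent long-range coupling into a failure of $\beta_r(\{0\}) \to 0$. One must exhibit an event $E_r \in \cB_{\{0\} \cup B_r^c}$ whose probability under the joint law exceeds its probability under the product of marginals by a constant independent of $r$. The natural candidates exploit the fact that, whenever a single storm is extremal at both $0$ and some $y \in B_r^c$, the values $\eta(0)$ and $\eta(y)$ are linked by the deterministic ratio $Y(y)/Y(0)$; so events that compare $\sup_{y \in B_r^c}\eta(y)$ to $\eta(0)$ on a common multiplicative scale should reveal the shared-storm contribution, for instance
\[
E_r(v,\tau) \;=\; \{\eta(0) > v\}\cap\Bigl\{\sup_{y \in B_r^c} \eta(y) > \tau\, \eta(0)\Bigr\}
\]
with $v$ large and $\tau \in (0,1)$ small, chosen according to the law of $Y(y)/Y(0)$ guaranteed by the displayed lower bound on $\bbP[A_r]$.

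The main obstacle is disentangling the shared-storm contribution from the \emph{non-shared} contribution to $\sup_{y \in B_r^c}\eta(y)$ coming from storms unrelated to the one extremal at $0$: as $r$ grows, $B_r^c$ is enormous and its independent-tail contribution tends to overwhelm any fixed threshold, washing out the discrepancy between joint and product laws. A viable proof would need either an $r$-dependent normalization of this supremum, or a passage to a sparse subset of $B_r^c$ on which the two-point exponent measure is under control, in order to isolate the shared-storm contribution. A perhaps cleaner alternative would be to sharpen the upper bound on $\beta_r(S)$ underlying Proposition \ref{prop2} (the estimate of Dombry--Eyi-Minko \cite{DEM12}) into a two-sided estimate in terms of the shared-extremal-function probability displayed above; such a matching lower bound on $\beta_r$ does not appear in the literature and seems to be the genuine difficulty, which is presumably why the statement is stated only as a conjecture.
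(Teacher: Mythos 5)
The statement you were asked to prove is stated in the paper as a \emph{conjecture}: the authors explicitly say they were unable to prove it, ``mainly because we lack a lower bound for the $\beta$-mixing coefficient $\beta(S_1,S_2)$ (only an upper bound is given in \cite{DEM12}).'' Your proposal is therefore not being measured against a proof in the paper, and, to your credit, you arrive at exactly the same diagnosis of where the difficulty lies. Your first displayed identity is correct and worth keeping: combining \eqref{eq:theo1.2} with $\bbE[Y(0)/\eta(0)]=1$ and $a-(a-b)^+=a\wedge b$ does give
$\bbP[C(0)\cap B_r^c\neq\emptyset]=\bbE\bigl[\tfrac{Y(0)}{\eta(0)}\wedge\sup_{y\in B_r^c}\tfrac{Y(y)}{\eta(y)}\bigr]\geq p$ for all $r$. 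But this is only a restatement of the hypothesis in spectral form; it is not yet a statement about $\beta_r(\{0\})$.

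The genuine gap is the step you yourself flag as ``the next step'': converting the persistent shared-storm probability into a uniform lower bound on the total-variation distance between the joint law of $(\eta(0),\eta|_{B_r^c})$ and the product of its marginals. Your candidate events $E_r(v,\tau)=\{\eta(0)>v\}\cap\{\sup_{y\in B_r^c}\eta(y)>\tau\eta(0)\}$ do not work as stated: for a stationary max-stable field with unit Fr\'echet margins, $\sup_{y\in B_r^c}\eta(y)=+\infty$ almost surely (the set $B_r^c$ contains infinitely many points at mutual distance bounded below, each contributing an independent-in-law Fr\'echet variable bounded below stochastically), so $E_r(v,\tau)$ collapses to $\{\eta(0)>v\}$ and carries no information about the coupling. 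More fundamentally, the upper bound $\beta(S_1,S_2)\leq 2\bbP[A(S_1,S_2)]$ of Dombry and Eyi-Minko used in Proposition \ref{prop2} has no known matching lower bound, and a shared extremal function between $0$ and $B_r^c$ need not force any detectable discrepancy in finite-dimensional distributions: the shared storm could be extremal at $0$ and only at points of $B_r^c$ where its value is swamped in law by independent storms. So the proposal correctly identifies the obstruction but does not overcome it, and the statement remains, as in the paper, unproved.
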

We were not able to prove the conjecture, mainly because we lack a lower bound for the $\beta$-mixing coefficient $\beta(S_1,S_2)$ (only an upper bound is given in \cite{DEM12}).
The intuition is relatively clear though: if the cell remains unbounded with positive probability, then the value $\eta(0)$ of the random field at the origin may have an impact at infinity
via the unbounded cell $C(0)$. In view of Corollary \ref{cor2}, see below, the conjecture can also be stated as follows: a stationary max-stable field $\eta$ is strongly $\beta$-mixing
 if and only if $\eta$ is purely dissipative.

\subsection{Preliminaries on stationary max-stable processes}

The structure of stationary max-stable processes was first investigated by de Haan and Pickands \cite{dHP86}.
Recently, further results were obtained by exploiting the analogy between the theory of max-stable and sum-stable processes.
Inspired by the works of Rosi\'nski \cite{R95,R00}, Rosi\'nski and Samorodnitsky \cite{RS96} and Samorodnitsky \cite{S04,S05},
the representation theory of stationary max-stable random fields via non-singular flows was developed independently by Kabluchko \cite{K09}, Wang and  Stoev \cite{WS10} and Wang {\it et al.}\ \cite{WRS13}.
See also Kabluchko and Stoev \cite{KS12} for an extension to sum- and max-infinitely divisible processes.
In these works, the conservative/dissipative and positive/null decompositions of the non-singular flow play a major role.

To avoid technical details of non-singular ergodic theory, we use a naive approach based on cone decompositions of max-stable processes (see, e.g.,\  Wang and Stoev \cite[Theorem 5.2]{WS10}).
The links between this approach and the non-singular ergodic theory are explored in Dombry and Kabluchko \cite{DK15}.

The following simple lemma about cone decompositions of max-stable processes will be useful.
Recall that $\cF_0=\cF( \mathcal{X},[0,+\infty))\setminus\{0\}$ denotes the set of continuous, non-negative functions on $\mathcal{X}$ excluding the zero function.
A measurable subset $\mathcal{C}\subset \mathcal{F}_0$ is called a {\it cone} if for all $f\in\mathcal{C}$ and $u>0$, $uf\in\mathcal{C}$.
The cone $\mathcal{C}$ is said to be {\it shift-invariant} if for all $f\in\mathcal{C}$ and $x\in\mathcal{X}$, we have $f(\cdot+x)\in\mathcal{C}$.
\begin{lemma}\label{lem:dec}
Let $\mathcal{C}_1$ and $\mathcal{C}_2$ be two measurable, shift-invariant cones such that $\mathcal{F}_0=\mathcal{C}_1\cup\mathcal{C}_2$ and $\mathcal{C}_1\cap\mathcal{C}_2=\emptyset$.
Let $\eta$ be a stationary max-stable process given by representation \eqref{eq:deHaan}. Consider the decomposition $\eta=\eta_1\vee \eta_2$ with
\[
\eta_1(x)=\bigvee_{i\geq 1} U_iY_i(x)1_{\{Y_i\in\mathcal{C}_1\}}\quad \mbox{and}\quad\eta_2(x)=\bigvee_{i\geq 1} U_iY_i(x)1_{\{Y_i\in\mathcal{C}_2\}}.
\]
Then, $\eta_1$ and $\eta_2$ are stationary and independent max-stable processes\footnote{with margins differing from the standardized form~\eqref{eq:frechet_standard} by a multiplicative constant} whose  distribution depends only on the distribution of $\eta$ and not on the specific representation \eqref{eq:deHaan}.
\end{lemma}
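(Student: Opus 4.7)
The plan is to reexpress the decomposition in terms of the canonical functional point process $\Phi = \{\phi_i\}_{i \geq 1}$ defined in \eqref{eq:phi} and then to invoke the restriction theorem for Poisson point processes. The key observation is that, since $\mathcal{C}_j$ is a cone, one has $f \in \mathcal{C}_j$ if and only if $uf \in \mathcal{C}_j$ for every $u > 0$; in particular $\{Y_i \in \mathcal{C}_j\} = \{\phi_i \in \mathcal{C}_j\}$, so the decomposition in the statement rewrites as
\[
\eta_j(x) = \bigvee_{\phi \in \Phi \cap \mathcal{C}_j} \phi(x), \qquad j = 1, 2.
\]

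Since $\mathcal{F}_0 = \mathcal{C}_1 \sqcup \mathcal{C}_2$ is a measurable partition, the restriction theorem for Poisson point processes yields that $\Phi_j := \Phi \cap \mathcal{C}_j$, $j = 1, 2$, are independent Poisson point processes on $\mathcal{F}_0$ with intensity measures $\mu_j := \mu|_{\mathcal{C}_j}$. Hence $\eta_1$ and $\eta_2$, being measurable functionals of $\Phi_1$ and $\Phi_2$ respectively, are independent, and each is max-stable, with Fréchet margins possibly rescaled by a constant, since $\bbE[Y(x)] = 1$ splits as $\bbE[Y(x)\mathbf{1}_{\{Y \in \mathcal{C}_1\}}] + \bbE[Y(x)\mathbf{1}_{\{Y \in \mathcal{C}_2\}}]$ (this accounts for the footnote of the lemma). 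Because $\mu$ depends only on the law of $\eta$ (as noted in the discussion following \eqref{eq:mu}), so do the restrictions $\mu_j$, and therefore so do the laws of $\eta_1$ and $\eta_2$.

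For stationarity, let $\tau_a : \mathcal{F}_0 \to \mathcal{F}_0$, $a \in \mathcal{X}$, denote the shift $(\tau_a f)(\cdot) = f(\cdot + a)$. Since $\eta$ is stationary, its finite-dimensional distributions are shift-invariant, and the explicit formula linking $\mu$ to these c.d.f.s (displayed just above \eqref{eq:mu}) forces $\mu$ to be $\tau_a$-invariant for every $a$. Combined with $\tau_a \mathcal{C}_j = \mathcal{C}_j$, this gives $\mu_j$ the same invariance, so each $\Phi_j$ is shift-invariant in distribution. The identity $\eta_j(\cdot + a) = \bigvee_{\phi \in \Phi_j}(\tau_a \phi)(\cdot)$ then yields stationarity of $\eta_j$. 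The one step that deserves a little care is the transfer of shift-invariance from $\eta$ to $\mu$, which rests on the explicit formula for $\mu$ in terms of the multivariate c.d.f.s of $\eta$; everything else reduces to standard properties of Poisson point processes on general measurable spaces.
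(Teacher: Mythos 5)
Your proof is correct and follows essentially the same route as the paper's: restrict the canonical Poisson point process $\Phi$ to the two cones, invoke independence of restrictions of a Poisson process to disjoint measurable sets, and transfer shift-invariance from $\eta$ to $\mu$ and hence to $\mu_j=1_{\mathcal{C}_j}\,\mathrm{d}\mu$. You merely spell out two steps the paper leaves implicit (that the cone property gives $\{Y_i\in\mathcal{C}_j\}=\{\phi_i\in\mathcal{C}_j\}$, and why stationarity of $\eta$ forces shift-invariance of $\mu$), which is fine.
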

The notion of Brown--Resnick stationarity introduced in Kabluchko {\it et al.}\ \cite{KSdH09} will be useful.
\begin{definition}\label{def:BR}
We say that the process $Y=(Y(x))_{x\in\cX}$ is {\it Brown--Resnick stationary} if the associated max-stable process $\eta$ defined by \eqref{eq:deHaan} is stationary.
\end{definition}
For future reference, we gather in the next lemma several properties of Brown-Resnick stationary processes. A shift-invariant cone $\cF_L$ is said to be {\it localizable} if there exist mappings $L_1:\cF_L\to \mathcal{X}$ and $L_2:\cF_L\to (0,+\infty)$
such that for all $f\in \cF_L$, $x\in \mathcal{X}$ and $u>0$,
\begin{itemize}
\item[-] $L_1(f(\cdot+x))=L_1(f)-x$ and $L_1(uf)=L_1(f)$,
\item[-] $L_2(f(\cdot+x))=L_2(f)$ and $L_2(uf)=uL_2(f)$.
\end{itemize}
\noindent
A typical example of localizable cone is the cone $\left\{f\in\cF_0;\lim_\infty f=0\right\}$ with $L_1(f)=\argmax f$ and $L_2(f)=\max f$ (if the maximum is attained at several points, we define the $\argmax$ as the smallest such point with respect to the lexicographic order).
\begin{lemma}\label{lemBR}
Let $Y$ and $Y'$ be independent Brown--Resnick stationary processes. In the case $\cX=\bbR^d$, we assume for statements iii) and iv) that the associated max-stable process has continuous sample paths.
\begin{enumerate}
\item[i)] The product $YY'$ is also Brown--Resnick stationary.
\item[ii)] Let $C$ be a shift-invariant cone, then $Y1_{\{Y\in C\}}$ is Brown--Resnick stationary.
\item[iii)] Let $K\subset\cX$ be  compact. In the case $\cX=\bbR^d$,  we suppose that the interior  of $K$ is non-empty. Then, modulo null sets,
\[
 \left\{\lim_{x\to\infty} Y(x)=0\right\}=\left\{\int_{\mathcal{X}}\sup_{y\in K}Y(x+y)\lambda(\mathrm{d}x)<\infty\right\}.
\]
\item[iv)] 
The cone $\cF_L=\{f\in\cF_0; \sup_{x\in\cX} f(x)> \limsup_{x\to\infty} f(x)\}$ is localizable and, modulo null sets,
\[
\{Y\in\cF_L\}\subset  \left\{\lim_{x\to\infty} Y(x)=0\right\}.
\]
In fact, the latter inclusion holds for any localizable cone.  
\end{enumerate}
\end{lemma}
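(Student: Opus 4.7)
The plan is to establish the four statements in turn, as they are essentially independent.

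For part (ii), compute the exponent measure of $\tilde Y = Y \mathbf{1}_{\{Y \in C\}}$: since $C$ is a cone, $uY \in C$ iff $Y \in C$, so $\mu_{\tilde Y}(A) = \mu_Y(A \cap C)$. Shift-invariance of $\mu_{\tilde Y}$ is then immediate from shift-invariance of $\mu_Y$ (by BR stationarity of $Y$) and of $C$ (by hypothesis), and a short computation using the scaling identity $\mu_Y(cA) = c^{-1}\mu_Y(A)$ shows the associated max-stable process has constant Fr\'echet-type margins and is stationary, giving BR stationarity of $\tilde Y$ in the extended sense of the footnote to Lemma \ref{lem:dec}.

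Part (i) reduces to showing $\mu_{YY'}$ is shift-invariant. From the Poisson point process representation, $\mu_{YY'}$ is the pushforward of $\mu_Y \otimes \bbP_{Y'}$ under pointwise multiplication $(f, g) \mapsto fg$. Since $\bbP_{Y'}$ itself is not shift-invariant in general (only $\mu_{Y'}$ is), shift-invariance of $\mu_{YY'}$ does not follow by a direct manipulation; I expect this to be the main obstacle. The cleanest route is via the non-singular flow representation of BR stationary processes (Kabluchko 2009, Wang--Stoev 2010): each of $Y$, $Y'$ is represented on a non-singular dynamical system, and $YY'$ then admits a representation on the product non-singular system, which yields BR stationarity of $YY'$ by the converse direction of the representation theorem.

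For part (iii), note that both events are scale-invariant in $Y$, so their $\bbP_Y$-triviality coincides with their $\mu_Y$-triviality. Working on $(\cF_0, \mu_Y)$ with the shift-invariant $\sigma$-finite measure-preserving action, I apply the Hopf decomposition of the shift flow; both events then characterize the dissipative part modulo null sets. The delicate inclusion $\{\int_\cX \sup_{y \in K} f(x+y)\,\lambda(dx) < \infty\} \subset \{\lim_{x \to \infty} f(x) = 0\}$ uses that $K$ has non-empty interior via a bump-spreading argument: a function with positive $\limsup$ at infinity has $\sup_{y \in K} f(x+y)$ bounded below by a fixed positive constant on an unbounded set of positive $\lambda$-measure, contradicting integrability.

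For part (iv), first check that $\cF_L$ is a shift-invariant cone and that $L_1(f) = \argmax f$ (lexicographically smallest maximizer; well-defined since $\sup f > \limsup_\infty f$ forces the supremum to be attained on a compact set) and $L_2(f) = \sup f$ satisfy the claimed equivariances. For the inclusion, use localizability together with the shift- and scale-invariances of $\mu_Y$ to disintegrate $\mu_Y|_{\cF_L}$ as $c\,\lambda(du) \otimes r^{-2}\,dr \otimes \nu(dg)$ over location, height, and a normalized shape $g$ (with $L_1(g) = 0$, $L_2(g) = 1$). The finiteness of $\mu_Y(\{f : f(x) > z\})$ for every $z > 0$ forces $\int_\cX \bbE_\nu[g(u)]\,\lambda(du) < \infty$; combined with part (iii) applied to the BR stationary process $Y\mathbf{1}_{\{Y \in \cF_L\}}$ obtained from part (ii), one concludes $\lim_\infty g = 0$ for $\nu$-a.e.\ shape $g$, and unwinding the disintegration recovers $\lim_\infty Y = 0$ on $\{Y \in \cF_L\}$. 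The same argument applies verbatim to any localizable cone.
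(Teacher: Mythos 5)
Your treatment of ii) matches the paper's (read the restricted exponent measure off Lemma \ref{lem:dec}), and for iii) your plan is the paper's plan: identify both events with the dissipative part of the Hopf decomposition of the shift acting on $(\cF_0,\mu)$, which the paper delegates to \cite[Proposition~10]{DK15}. One caution on iii): the inclusion you call delicate, $\{\int_{\cX}\sup_{y\in K}f(x+y)\,\lambda(\mathrm{d}x)<\infty\}\subset\{\lim_{x\to\infty}f(x)=0\}$, is actually the easy one — your bump-spreading argument proves it pathwise for every continuous $f$, with no probability. The genuinely hard inclusion is the converse, $\{\lim_\infty Y=0\}\subset\{\int_\cX\sup_K Y(x+\cdot)\,\mathrm{d}\lambda<\infty\}$ modulo null sets, which is false for general processes and is exactly where the Hopf machinery ($\cF_D\subset\tilde\cF_D$ mod $\mu$-null in \cite{DK15}) is needed; you assert it without proof, but so does the paper, so I only flag the misplaced emphasis.

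The genuine gaps are in i) and iv). For i), the obstacle you diagnose is illusory: writing Brown--Resnick stationarity of $Y$ as shift-invariance of $(x_1,\dots,x_n;z_1,\dots,z_n)\mapsto\bbE[\max_i Y(x_i)/z_i]$, a two-step conditioning (first on $Y'$ with the frozen weights $z_i/Y'(x_i+h)$, then on $Y$) gives $\bbE[\max_i Y(x_i+h)Y'(x_i+h)/z_i]=\bbE[\max_i Y(x_i)Y'(x_i)/z_i]$ directly; this is \cite[Corollary~8]{KSdH09}, which the paper invokes. Your flow detour is not only heavier but has a hidden hole: the representation theorem only produces \emph{some} spectral process with the same exponent measure as $Y$, so to substitute flow versions for $Y$ and $Y'$ you need that $\mu_{YY'}$ depends on the pair only through $(\mu_Y,\mu_{Y'})$ and not through the individual laws of $Y,Y'$ — true, but its proof is essentially the conditioning computation you tried to avoid. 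For iv), in the case $\cX=\bbR^d$ your integrability input is too weak: $\mu_Y(\{f:f(x)>z\})<\infty$ yields only $\int_\cX\bbE_\nu[g(u)]\,\lambda(\mathrm{d}u)<\infty$, hence $\int_\cX g\,\mathrm{d}\lambda<\infty$ for $\nu$-a.e.\ shape, and a continuous integrable $g$ need not vanish at infinity. The appeal to iii) cannot close this, since to apply iii) to $Y1_{\{Y\in\cF_L\}}$ you must first establish one of its two sides, which is precisely the point at issue. What you need is the locally uniform bound $\int_\cX\bbE_\nu[\sup_{y\in K}g(u+y)]\,\lambda(\mathrm{d}u)<\infty$, which comes from the standing assumption $\bbE[\sup_K Y]<\infty$ in \eqref{eq:deHaan2} (equivalently $\mu_Y(\{\sup_K f>z\})<\infty$) rather than from the one-point marginal; with that, your bump argument finishes, and the whole construction is then in substance the paper's appendix proof via the mixed moving maximum representation.
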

Statement i) is due to Kabluchko {\it et al.}\ \cite[Corollary 8]{KSdH09}, statement ii) is a by-product of Lemma \ref{lem:dec} and its proof.
Statements iii) and iv) are closely related to Proposition 10 and its proof in Kabluchko and Dombry \cite{DK15}. In the proof of \cite[Proposition 10]{DK15}, we show that $\mathcal{F}_D=\mathcal{F}_D'=\tilde\cF_D$ wich implies iii); statement iv) is proven with the same arguments as for the proof of $\tilde\cF_D\subset \cF_D$ in \cite{DK15}, see the appendix for more details.

\subsection{Boundedness of cells}\label{subsec:boundedness}
We prove that the boundedness of the cell $C(x)$, $x\in\mathcal{X}$, is strongly connected with the conservative/dissipative decomposition of the max-stable process $\eta$.
Introduce the following shift-invariant cones of functions:
\begin{eqnarray}
\cF_C&=&\left\{f\in \cF_0;\ \limsup_{x\to\infty} f(x)>0 \right\},\label{eq:CC}\\
\cF_D&=&\left\{f\in \cF_0;\ \lim_{x\to\infty} f(x)=0 \right\}\label{eq:CD}.
\end{eqnarray}
The conservative/dissipative decomposition  of $\eta$ is given by
\begin{eqnarray}
\eta_C(x)&=& \bigvee_{i\geq 1} U_iY_i(x)1_{\{Y_i\in\cF_C\}},\label{eq:cons}\\
\eta_D(x)&=& \bigvee_{i\geq 1} U_iY_i(x)1_{\{Y_i\in\cF_D\}}.\label{eq:dis}
\end{eqnarray}
According to Lemma \ref{lem:dec}, the processes $\eta_C$ and $\eta_D$ are independent stationary max-stable processes such that $\eta=\eta_C\vee \eta_D$.
In dimension $d=1$, this cone decomposition is known to be related to the  the conservative/dissipative decomposition of the non-singular flow generating $\eta$ (see, e.g.,\ Wang and Stoev \cite[Theorem 5.2]{WS10}).
The following theorem relates this conservative/dissipative decomposition to the boundedness of the cell $C(x)$.
\begin{theorem}\label{theo2}
Let $x\in \mathcal{X}$. The following events are equal modulo null sets:
\begin{equation}\label{theo2.1}
\{C(x)\ \mbox{is unbounded}\}=\{\eta_C(x)>\eta_D(x)\},
\end{equation}
\begin{equation}\label{theo2.2}
\{C(x)\ \mbox{is bounded}\}=\{\eta_D(x)>\eta_C(x)\}.
\end{equation}
\end{theorem}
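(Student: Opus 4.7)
Let $\phi^*$ denote the almost surely unique atom of $\Phi$ with $\phi^*(x) = \eta(x)$, so that $C(x) = \{y : \phi^*(y) = \eta(y)\}$. Since $\cF_0 = \cF_C \sqcup \cF_D$, one has $\phi^* \in \cF_C$ or $\phi^* \in \cF_D$ almost surely, and the events $\{\phi^* \in \cF_C\}$ and $\{\eta_C(x) > \eta_D(x)\}$ coincide modulo the null event $\{\eta_C(x) = \eta_D(x)\}$ (and similarly with $\cF_D$). The theorem thus reduces to proving
\[
\{\phi^* \in \cF_D\} \subseteq \{C(x)\text{ bounded}\} \quad \text{and} \quad \{C(x)\text{ bounded}\} \subseteq \{\phi^* \in \cF_D\}
\]
modulo null sets. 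The basic tool is the Slivnyak--Mecke formula: the law of $\phi^*$ has density $e^{-1/\phi(x)}$ with respect to $\mu$, and conditionally on $\phi^* = f$, the remaining atoms form an independent Poisson process of intensity $\mu$ restricted to $\{\phi : \phi(x) < f(x)\}$. Using the identity $\mu\{\phi : \phi(y) > a,\ \phi(x) < b\} = \bbE[(Y(y)/a - Y(x)/b)^+]$ (obtained by writing $\phi = uY$ and integrating in $u$), this yields
\[
\bbP[y \in C(x) \mid \phi^* = f] = \exp\bigl(-\bbE[(Y(y)/f(y) - Y(x)/f(x))^+]\bigr).
\]

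For the first inclusion, assume $\phi^* = f \in \cF_D$, so $f(y) \to 0$. Since $(z)^+ \geq z$, the exponent above is at least $1/f(y) - 1/f(x) \to +\infty$, and hence $\bbP[y \in C(x) \mid \phi^* = f]$ decays super-polynomially in $|y|$. In the lattice case $\cX = \bbZ^d$, Lemma \ref{lemBR}(iii) yields $f \in \ell^1(\bbZ^d)$; combined with $e^{-1/(2f(y))} \leq f(y)$ valid for $|y|$ large, this gives $\sum_{|y|>R}\bbP[y \in C(x) \mid \phi^*] < \infty$, and Borel--Cantelli ensures $C(x)$ is almost surely finite. In the continuum case $\cX = \bbR^d$, the same estimate combined with the continuity of $\phi^*$ and the integrability of local suprema from Lemma \ref{lemBR}(iii) reduces the problem to a lattice of unit cubes.

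For the reverse inclusion, I match probabilities. A direct Palm computation yields
\[
\bbP[\phi^* \in \cF_D] = \int_{\cF_D} e^{-1/\phi(x)} \mu(d\phi) = \bbE\bigl[Y(x)\, 1_{\{Y \in \cF_D\}}\bigr],
\]
while Corollary \ref{cor1.2} gives $\bbP[C(x)\text{ bounded}] = \bbE[(Y(x)/\eta(x) - \limsup_{y\to\infty} Y(y)/\eta(y))^+]$ with $Y$ independent of $\eta$. On $\{Y \in \cF_D\}$ one has $Y(y) \to 0$ almost surely (Lemma \ref{lemBR}(iii)); the Fr\'echet marginal of $\eta$ gives $\bbP[\eta(y) \leq Y(y)/\epsilon \mid Y] = e^{-\epsilon/Y(y)} \leq Y(y)$ for $|y|$ large, and summability of $Y$ together with Borel--Cantelli (plus a covering argument in the continuum case) yields $\limsup_y Y(y)/\eta(y) = 0$ almost surely on $\{Y \in \cF_D\}$. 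Hence the $\cF_D$-contribution to the expectation equals $\bbE[Y(x)/\eta(x);\, Y \in \cF_D] = \bbE[Y(x)\, 1_{\{Y \in \cF_D\}}]\, \bbE[1/\eta(x)] = \bbE[Y(x)\, 1_{\{Y \in \cF_D\}}]$, matching $\bbP[\phi^* \in \cF_D]$. The required equality $\bbP[C(x)\text{ bounded}] = \bbP[\phi^* \in \cF_D]$, and hence the reverse inclusion modulo null sets, will therefore follow once the $\{Y \in \cF_C\}$-contribution is shown to vanish, i.e.\ once $\limsup_{y\to\infty} Y(y)/\eta(y) \geq Y(x)/\eta(x)$ is established almost surely on $\{Y \in \cF_C\}$.

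This last claim is the main obstacle. On $\{Y \in \cF_C\}$, $\limsup_y Y(y) > 0$ provides a random sequence $y_n \to \infty$ with $Y(y_n) \geq c > 0$, and it remains to show $\liminf_n \eta(y_n) \leq c\eta(x)/Y(x)$ almost surely. Since $\bbP[\eta(y_n) < \epsilon] = e^{-1/\epsilon}$ is constant in $n$ by stationarity, a direct Borel--Cantelli argument fails without additional input. I would expect the resolution to draw on the Poisson representation of $\eta$ combined with a recurrence property associated to the conservative part of the non-singular flow generating $Y$: by selecting $y_n$ along which the shifted process $\tau_{y_n} Y$ does not escape (which conservativity provides) and exploiting Palm calculus for $\Phi$, one argues that $\eta(y_n)$ must return close to $\eta(x)$ along a subsequence, yielding the required inequality.
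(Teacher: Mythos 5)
Your overall architecture (Palm calculus for the extremal atom $\phi^*$, together with the identification of $\{\phi^*\in\cF_C\}$ with $\{\eta_C(x)>\eta_D(x)\}$ modulo null sets) is sound and genuinely different from the paper's bookkeeping, which runs through the expectation identities of Lemma \ref{lem1} and Corollary \ref{cor1.2}. But the proof is incomplete at exactly the point you flag yourself: you never establish that $\limsup_{y\to\infty}Y(y)/\eta(y)\geq Y(x)/\eta(x)$ a.s.\ on $\{Y\in\cF_C\}$, i.e.\ that a conservative extremal atom forces the cell to be unbounded, and the closing appeal to ``recurrence of the conservative part of the flow'' is a heuristic, not an argument. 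The paper's resolution is a specific mechanism you are missing: the process $Z=\frac{Y}{\eta}1_{\{Y\in\cF_C\}}$ is itself Brown--Resnick stationary (Lemma \ref{lemBR} i)--ii): products and cone restrictions preserve the property), and Lemma \ref{lemBR} iv) asserts that for a Brown--Resnick stationary process, membership in the localizable cone $\cF_L=\{f:\sup f>\limsup_\infty f\}$ forces decay to zero at infinity modulo null sets. Hence $\bbP[\sup_\cX Z>\limsup_\infty Z]\leq\bbP[Z\in\cF_D]$, and the latter is killed by an elementary contradiction: on $\{Y\in\cF_C\}$ choose a $\sigma(Y)$-measurable sequence $x_n\to\infty$ with $Y(x_n)\geq\frac12\limsup_\infty Y>0$; then $Z(x_n)\to0$ would force $\eta(x_n)\to+\infty$, which is impossible since $\eta(x_n)$ is unit Fr\'echet for every $n$ by stationarity and independence of $\eta$ from $Y$. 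Without Lemma \ref{lemBR} iv) or an equivalent ergodic-theoretic input, the inclusion $\{\eta_C(x)>\eta_D(x)\}\subset\{C(x)\mbox{ unbounded}\}$ does not follow, so the theorem is not proved.

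A secondary gap: in the continuum case $\cX=\bbR^d$ your first inclusion is only sketched, and the ``reduction to a lattice of unit cubes'' is not routine. Finiteness of $\bbE[\lambda(C(x))\mid\phi^*]$ does not imply boundedness of a closed set, and the natural void-probability bound $\bbP[C(x)\cap Q\neq\emptyset\mid\phi^*=f]\leq\exp\bigl(-\bbE[(\inf_QY/\sup_Qf-Y(x)/f(x))^+]\bigr)$ is useless unless $\bbE[\inf_QY]$ is bounded away from zero over unit cubes, which fails for spectral processes vanishing somewhere in every cube (e.g.\ compactly supported shapes). The paper sidesteps this entirely by working with $\limsup_{y\to\infty}Y(y)/\eta(y)$ via Corollary \ref{cor1.2} and applying Lemma \ref{lemBR} iii) to the Brown--Resnick stationary process $\frac{Y}{\eta}1_{\{Y\in\cF_D\}}$, whose local suprema are integrable over $\cX$; you would do well to adopt that step rather than the cube-by-cube Borel--Cantelli.
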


We denote by $\alpha_C$ and $\alpha_D$ the scale parameters of the $1$-Fr\'echet random variables $\eta_C(x)$ and $\eta_D(x)$ respectively, i.e.\ for all $z>0$,
\begin{equation}\label{eq:defalpha}
\bbP[\eta_C(x)\leq z ]=\exp(-\alpha_C/z),\quad  \bbP[\eta_D(x)\leq z ]=\exp(-\alpha_D/z).
\end{equation}
Note that $\alpha_D+\alpha_C=1$ and that $\alpha_C$ and $\alpha_D$ do not depend on $x\in\cX$.
We say that $\eta$ is purely conservative (resp.\ purely dissipative) if $\alpha_C=1$ (resp. $\alpha_D=1$).

\begin{corollary}\label{cor2} Let $x\in\mathcal{X}$. We have:
\begin{itemize}
\item[i)] $\bbP[C(x)\ \mbox{is unbounded}]=\alpha_C$,
\item[ii)] $\bbP[C(x)\ \mbox{is bounded}]=\alpha_D$,
\item[iii)] $C(x)$ is unbounded a.s.\ if and only if $\eta$ is purely conservative,
\item[iv)] $C(x)$ is bounded a.s.\ if and only if $\eta$ is purely dissipative.
\end{itemize}
\end{corollary}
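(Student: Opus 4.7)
The plan is to derive all four statements as direct consequences of Theorem \ref{theo2} together with an elementary computation for two independent $1$-Fréchet random variables. By Theorem \ref{theo2}, up to null events we have $\{C(x)\text{ is unbounded}\} = \{\eta_C(x) > \eta_D(x)\}$ and $\{C(x)\text{ is bounded}\} = \{\eta_D(x) > \eta_C(x)\}$. Moreover, by Lemma \ref{lem:dec}, $\eta_C(x)$ and $\eta_D(x)$ are independent, and by definition \eqref{eq:defalpha} they are $1$-Fréchet with scale parameters $\alpha_C$ and $\alpha_D$ satisfying $\alpha_C+\alpha_D=1$ (the latter because $\eta(x)=\eta_C(x)\vee\eta_D(x)$ is standard $1$-Fréchet).

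Next I would establish the basic identity: if $X$ and $Y$ are independent with $\bbP[X\leq z]=\exp(-a/z)$ and $\bbP[Y\leq z]=\exp(-b/z)$ for $a,b>0$, then $\bbP[X>Y]=a/(a+b)$. This follows from a one-line computation: conditioning on $Y=y$ and using $\bbP[X>y]=1-\exp(-a/y)$, the substitution $u=1/y$ reduces the integral
\[
\bbP[X>Y]=\int_0^\infty\bigl(1-\eee^{-a/y}\bigr)\frac{b}{y^2}\eee^{-b/y}\,\mathrm{d}y
\]
to $\int_0^\infty b\bigl(\eee^{-bu}-\eee^{-(a+b)u}\bigr)\mathrm{d}u=1-b/(a+b)=a/(a+b)$.

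Applying this with $a=\alpha_C$, $b=\alpha_D$ and using $\alpha_C+\alpha_D=1$ yields $\bbP[\eta_C(x)>\eta_D(x)]=\alpha_C$ and $\bbP[\eta_D(x)>\eta_C(x)]=\alpha_D$, which together with Theorem \ref{theo2} immediately give statements i) and ii). Statements iii) and iv) follow at once: $C(x)$ is unbounded almost surely iff $\alpha_C=1$, which is the definition of $\eta$ being purely conservative, and symmetrically $C(x)$ is bounded almost surely iff $\alpha_D=1$, i.e.\ $\eta$ is purely dissipative.

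There is essentially no obstacle here, since Theorem \ref{theo2} is assumed available; the only thing to verify carefully is that the almost-sure equalities of events from Theorem \ref{theo2} remain valid when we take probabilities (trivial) and that the scale parameters add to $1$ (follows from $\bbP[\eta(x)\leq z]=\bbP[\eta_C(x)\leq z]\bbP[\eta_D(x)\leq z]$ and \eqref{eq:frechet_standard}). The fact that $\alpha_C,\alpha_D$ do not depend on $x$ is a consequence of stationarity of $\eta_C$ and $\eta_D$, which is itself part of Lemma \ref{lem:dec}.
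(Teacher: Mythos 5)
Your proposal is correct and follows essentially the same route as the paper: Theorem \ref{theo2} converts the boundedness events into the comparison events $\{\eta_C(x)>\eta_D(x)\}$ and $\{\eta_D(x)>\eta_C(x)\}$, and the probability $\bbP[\eta_C(x)>\eta_D(x)]=\alpha_C$ is exactly the content of Lemma \ref{lem1}-ii), which the paper proves by the same elementary Fr\'echet computation you carry out (conditioning on the other variable, but otherwise identical). Your verification that $\alpha_C+\alpha_D=1$ and the deduction of iii) and iv) from i) and ii) also match the paper's argument.
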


\subsection{Asymptotic density of cells}
Next we consider the decomposition of $\eta$ into positive and null  components and relate it to the asymptotic density of the cell $C(x)$.
For this purpose, we introduce a new construction of the positive/null decomposition of max-stable processes which simplifies and extends to the dimension $d\geq 1$ the construction from Samorodnitsky \cite{S05} and Wang and Stoev \cite[Example 5.4]{WS10}.

Recall that we write $B_r=[-r,r]^d\cap\cX$ for $r>0$ and that $\lambda$   is either the counting or the Lebesgue measure on $ \mathcal{X}$, when  $\mathcal{X}=\bbZ^d$ or $\mathcal{X}=\bbR^d$, respectively. Consider the shift-invariant cones of functions
\begin{eqnarray}
\cF_P&=&\left\{f\in \cF_0;\ \lim_{r\to\infty}\frac{1}{\lambda(B_r)}\int_{B_r} f(x)\lambda(\mathrm{d}x)>0\right\},\label{eq:FP}\\
\cF_N&=&\left\{f\in \cF_0;\ \liminf_{r\to\infty}\frac{1}{\lambda(B_r)}\int_{B_r} f(x)\lambda(\mathrm{d}x)=0\right\}.\label{eq:FN}
\end{eqnarray}
In the definition of $\cF_P$, we assume that the limit exists.  The stationarity of $\eta$ implies that $Y\in \cF_P\cup \cF_N$ a.s.;\ see Dombry and Kabluchko~\cite{DK15}.
According to Lemma \ref{lem:dec}, the corresponding decomposition  is
\begin{eqnarray}
\eta_P(x)&=& \bigvee_{i\geq 1} U_iY_i(x)1_{\{Y_i\in\cF_P\}},\label{eq:pos}\\
\eta_N(x)&=& \bigvee_{i\geq 1} U_iY_i(x)1_{\{Y_i\in\cF_N\}},\label{eq:null}
\end{eqnarray}
where the processes $\eta_N$ and $\eta_P$ are independent, stationary, max-stable and $\eta=\eta_P\vee \eta_N$.
This decomposition based on cones is equal to the positive/null decomposition based on the underlying non-singular flow (see e.g.,\ Wang and Stoev \cite[Theorem 5.3]{WS10} in dimension $d=1$, Wang {\it et al.} \cite{WRS13} in dimension $d\geq 1$, Dombry and Kabluchko \cite{DK15}).

Given a measurable subset $C\subset \mathcal{X}$, we define its lower and upper asymptotic densities  by
\[
\delta^-(C)=\liminf_{r\to +\infty}\frac{\lambda(C\cap B_r)}{\lambda(B_r)},\quad \delta^+(C)=\limsup_{r\to +\infty}\frac{\lambda(C\cap B_r)}{\lambda(B_r)}.
\]
If $\delta^-(C)=\delta^+(C)$, the common value is called the asymptotic density of $C$ and denoted by $\delta(C)$.
The following theorem relates the positive/null decomposition of $\eta$ to the asymptotic density of the cell $C(x)$.
\begin{theorem}\label{theo3}
Let $x\in \mathcal{X}$. The following events are equal modulo null sets:
\begin{equation}\label{theo3.1}
\{\delta(C(x))>0\}=\{\eta_P(x)>\eta_N(x)\},
\end{equation}
\begin{equation}\label{theo3.2}
\{\delta^-(C(x))=0\}=\{\eta_N(x)>\eta_P(x)\},
\end{equation}
where the notation  $\delta(C(x))>0$ means that the asymptotic density $\delta(C(x))$ exists and is positive.
\end{theorem}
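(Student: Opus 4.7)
The plan is to mirror the proof of Theorem~\ref{theo2}, substituting the positive/null decomposition for the conservative/dissipative one. By Lemma~\ref{lem:dec} applied to the shift-invariant cones $\cF_P$ and $\cF_N$, split $\eta=\eta_P\vee\eta_N$ into two independent stationary max-stable components. Their values at $x$ are independent rescaled $1$-Fr\'echet variables, so the events in \eqref{theo3.1}--\eqref{theo3.2} partition $\Omega$ modulo a null set; moreover the almost surely unique atom $\phi_{i^*}=U_{i^*}Y_{i^*}$ of $\Phi$ attaining $\eta(x)$ lies in $\{Y\in\cF_P\}$ precisely on $\{\eta_P(x)>\eta_N(x)\}$. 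It therefore suffices to prove \textbf{(a)} $\delta^-(C(x))=0$ a.s.\ on $\{Y_{i^*}\in\cF_N\}$ and \textbf{(b)} $\delta(C(x))$ exists and is strictly positive a.s.\ on $\{Y_{i^*}\in\cF_P\}$.

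The common tool is a Palm/Slivnyak computation. Conditionally on $\phi_{i^*}$, the residual process $\Phi\setminus\{\phi_{i^*}\}$ is still Poisson with intensity $\mu$, so $\tilde\eta:=\bigvee_{j\neq i^*}\phi_j$ is distributed as $\eta$ and independent of $\phi_{i^*}$. Hence $y\in C(x)\iff\tilde\eta(y)\leq U_{i^*}Y_{i^*}(y)$, and the $1$-Fr\'echet marginals of $\tilde\eta$ yield
\[
\bbP(y\in C(x)\mid\phi_{i^*})=\exp\!\bigl(-1/(U_{i^*}Y_{i^*}(y))\bigr).
\]
For \textbf{(a)}, the elementary inequality $e^{-1/z}\leq z$ (valid for all $z>0$) gives
\[
\bbE\!\left[\frac{\lambda(C(x)\cap B_r)}{\lambda(B_r)}\,\Big|\,\phi_{i^*}\right]\leq\frac{U_{i^*}}{\lambda(B_r)}\int_{B_r}Y_{i^*}(y)\,\lambda(\mathrm{d}y),
\]
whose liminf is zero on $\{Y_{i^*}\in\cF_N\}$ by the definition of $\cF_N$. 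Fatou applied to this nonnegative conditional expectation then gives $\bbE[\delta^-(C(x))\mid\phi_{i^*}]=0$ a.s.\ on this event, and \textbf{(a)} follows.

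For \textbf{(b)} the argument proceeds in two steps. First, show that on $\{Y_{i^*}\in\cF_P\}$ the conditional mean $m_r:=\frac{1}{\lambda(B_r)}\int_{B_r}e^{-1/(U_{i^*}Y_{i^*}(y))}\,\lambda(\mathrm{d}y)$ converges a.s.\ to a strictly positive limit: the bound $e^{-1/(uv)}\geq e^{-1}\,1_{\{v\geq 1/u\}}$, combined with the positive Ces\`aro-mean property defining $\cF_P$ and Brown--Resnick ergodic results from \cite{WS10,WRS13,DK15}, provides a positive liminf. Second, upgrade this to a.s.\ convergence of $X_r:=\lambda(C(x)\cap B_r)/\lambda(B_r)$ via a variance estimate based on the two-point formula
\[
\bbP(y_1,y_2\in C(x)\mid\phi_{i^*})=\exp\!\Bigl(-\bbE\!\bigl[\max(Y(y_1)/\phi_{i^*}(y_1),\,Y(y_2)/\phi_{i^*}(y_2))\bigr]\Bigr),
\]
with $Y$ an independent copy of the spectral process, which comes from the exponent measure of $\tilde\eta$. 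Subtracting the product of one-point probabilities and integrating over $B_r\times B_r$ yields $\mathrm{Var}(X_r\mid\phi_{i^*})\to 0$; a Borel--Cantelli argument on the dyadic subsequence $r_n=2^n$, combined with a monotonicity interpolation between scales, then gives $X_r\to\lim m_r>0$ almost surely. The main obstacle lies in this positive case: while \textbf{(a)} reduces to a short Fatou inequality, step \textbf{(b)} requires a genuine ergodic input for Brown--Resnick stationary spectral functions and a delicate two-point variance bound.
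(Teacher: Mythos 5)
Your reduction to the two inclusions is the same as the paper's, and your part \textbf{(a)} is essentially sound once one fixes the Palm statement: it is not true that, given the atom $\phi_{i^*}$ realizing $\eta(x)$, the residual process is Poisson with intensity $\mu$ independent of $\phi_{i^*}$ (the residual law is the law of $\Phi$ biased by $\{f(x)>\eta(x)\}$, so your displayed one-point and two-point ``formulas'' are both false as stated). The correct way to run your computation is the unconditioned Mecke formula, which gives $\bbE[\lambda(C(x)\cap B_r)1_{\{Y_{i^*}\in\cF_N\}}]=\int_{\cF_N}\bbE[1_{\{f(x)>\eta(x)\}}\lambda(\{f\geq \eta\}\cap B_r)]\,\mu(\mathrm{d}f)$, and then $\bbP[f(y)\geq\eta(y)]=e^{-1/f(y)}\leq f(y)$ together with the definition of $\cF_N$ and Fatou does yield $\delta^-(C(x))=0$ on $\{Y_{i^*}\in\cF_N\}$. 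This is actually a more elementary route than the paper's, which instead deduces the first inclusion from ergodicity of the null component (via Proposition~\ref{prop1} and a subsequence extraction).

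Part \textbf{(b)}, however, has a genuine gap that I do not think can be repaired along your lines. On the event $\{Y_{i^*}\in\cF_P\}$ (which has positive probability only when $\alpha_P>0$) the field $\eta$ has a non-trivial positive component and is therefore \emph{not} ergodic; the limiting density $\delta(C(x))$ is a genuinely random variable, namely a conditional expectation given the invariant $\sigma$-field. Consequently $X_r=\lambda(C(x)\cap B_r)/\lambda(B_r)$ does \emph{not} concentrate around its conditional mean given $\phi_{i^*}$: the covariances $\mathrm{Cov}\bigl(1_{\{y_1\in C(x)\}},1_{\{y_2\in C(x)\}}\mid\phi_{i^*}\bigr)$ do not vanish in Ces\`aro mean precisely because the residual field retains long-range dependence in the positive/conservative regime, so $\Var(X_r\mid\phi_{i^*})\not\to 0$ and the Borel--Cantelli step collapses. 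A second, independent problem is your lower bound on $m_r$: a positive Ces\`aro mean of $Y_{i^*}$ does not imply positive lower density of any superlevel set $\{Y_{i^*}\geq 1/U_{i^*}\}$ (a function can carry all its mass on a set of density zero), so even the positive liminf is not established. The paper's proof avoids both issues by passing to a representation in which $\tilde Y_i=Y_i1_{\{Y_i\in\cF_P\}}$ is stationary and applying the multiparameter ergodic theorem to $x\mapsto 1_{\{U_i\tilde Y_i(x)=\eta(x)\}}$; this yields almost sure existence of $\delta(\tilde C_i)=\bbE[1_{\{0\in\tilde C_i\}}\mid\cI]$ and the dichotomy that either $\delta(\tilde C_i)>0$ or $\lambda(\tilde C_i)=0$, which combined with $\lambda(C(x))>0$ gives the second inclusion. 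Some ergodic-theoretic input of this kind is unavoidable here; an $L^2$ concentration argument cannot substitute for it.
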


We denote by $\alpha_P$ and $\alpha_N$ the scale parameters of the $1$-Fr\'echet random variables $\eta_P(x)$ and $\eta_N(x)$ respectively, i.e.\ for all $z>0$,
\[
\bbP[\eta_P(x)\leq z]=\exp(-\alpha_P/z)\ \ \mbox{and}\ \ \bbP[\eta_N(x)\leq z]=\exp(-\alpha_N/z).
\]
Note that $\alpha_P+\alpha_N=1$ and that $\alpha_P$ and $\alpha_N$ do not depend on $x$.
We say that the max-stable process $\eta$ is generated by a positive (resp.\ null) flow if  $\alpha_P=1$ (resp.\ $\alpha_N=1$).

\begin{corollary}\label{cor3} Let $x\in\mathcal{X}$. We have:
\begin{itemize}
\item[i)] $\bbP[\delta(C(x))>0]= \alpha_P$,
\item[ii)] $\bbP[\delta^-(C(x))=0]=\alpha_N$,
\item[iii)] $\delta(C(x))>0$ a.s.\ if and only if $\eta$ is generated by a positive flow,
\item[iv)] $\delta^-(C(x))=0$  a.s.\ if and only if $\eta$ is generated by a null flow.
\end{itemize}
\end{corollary}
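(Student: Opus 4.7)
The plan is to deduce Corollary \ref{cor3} directly from Theorem \ref{theo3} together with the elementary computation of $\bbP[\eta_P(x)>\eta_N(x)]$ using the fact that $\eta_P(x)$ and $\eta_N(x)$ are independent $1$-Fréchet random variables with scale parameters $\alpha_P$ and $\alpha_N$ satisfying $\alpha_P+\alpha_N=1$ (the latter identity following from $\eta(x)=\eta_P(x)\vee\eta_N(x)$ being standard $1$-Fréchet).

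For statement i), Theorem \ref{theo3} gives $\bbP[\delta(C(x))>0]=\bbP[\eta_P(x)>\eta_N(x)]$. I would compute the right-hand side as follows. Conditioning on $\eta_N(x)=y$ and using independence together with the distribution functions of the two $1$-Fréchet variables,
\[
\bbP[\eta_P(x)>\eta_N(x)]=\int_0^\infty \frac{\alpha_N}{y^2}\eee^{-\alpha_N/y}\bigl(1-\eee^{-\alpha_P/y}\bigr)\mathrm{d}y.
\]
The substitution $u=1/y$ turns this into $\alpha_N\int_0^\infty\eee^{-\alpha_N u}(1-\eee^{-\alpha_P u})\mathrm{d}u=1-\alpha_N/(\alpha_N+\alpha_P)=\alpha_P$, since $\alpha_P+\alpha_N=1$. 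The same argument with the roles of $P$ and $N$ swapped yields $\bbP[\eta_N(x)>\eta_P(x)]=\alpha_N$, which by Theorem \ref{theo3} is statement ii). (Note that $\bbP[\eta_P(x)=\eta_N(x)]=0$ because the two independent random variables have atomless distributions, so the two events partition the sample space up to a null set, consistent with $\alpha_P+\alpha_N=1$.)

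Statements iii) and iv) are immediate corollaries of i) and ii): $\delta(C(x))>0$ a.s.\ is equivalent to $\alpha_P=1$, and $\delta^-(C(x))=0$ a.s.\ is equivalent to $\alpha_N=1$, which are by definition the conditions that $\eta$ be generated by a positive, respectively null, flow.

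I do not expect any genuine obstacle here: Theorem \ref{theo3} has already done the essential geometric and ergodic-theoretic work, and what remains is a textbook Fréchet computation together with the observation $\alpha_P+\alpha_N=1$. The only minor subtlety worth flagging is the verification that the two events in Theorem \ref{theo3} are complementary up to a null set, which follows from $\bbP[\eta_P(x)=\eta_N(x)]=0$.
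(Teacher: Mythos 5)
Your proposal is correct and follows essentially the same route as the paper: the authors omit the proof of Corollary \ref{cor3}, pointing to the proof of Corollary \ref{cor2}, which likewise combines the relevant theorem with the computation $\bbP[\eta_P(x)>\eta_N(x)]=\alpha_P$ for independent $1$-Fr\'echet variables (done there by writing $\bbE[\exp(-\alpha_D/\eta_C(x))]=\int_0^\infty \exp(-\alpha_D/u)\,\mathrm{d}(\eee^{-\alpha_C/u})$, which is the same integral you obtain after your substitution). Your treatment of the null event $\{\eta_P(x)=\eta_N(x)\}$ and the deduction of iii) and iv) from i) and ii) matches the paper's argument as well.
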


\section{Proofs related to Section 2}

\subsection{Proof of Theorem~\ref{theo1}}
\begin{proof}[Proof of Theorem~\ref{theo1}]
We first prove  \eqref{eq:theo1.1}. For $f,g:\mathcal{X}\to \bbR$  and $K\subset \mathcal{X}$, we use the notation
\[
f>_K g \quad \mbox{if and only if}\quad f(x)>g(x) \mbox{ for all }x\in K.
\]
For $i\geq 1$, we write $m_i=\bigvee_{j\neq i} \phi_j$ where $\phi_i=U_iY_i$ is defined by \eqref{eq:phi}.
Fix some $x\in\mathcal{X}$. Note that  $x\in C_i$ if and only if $\phi_i(x)\geq m_i(x)$, whence (modulo null sets)
\begin{eqnarray*}
\{K\subset C(x)\}&=&\{\exists i\geq 1,\ \phi_i(x)>m_i(x) \mbox{ and } \forall y\in K,\ \phi_i(y)>m_i(y)\}\\
&=&\{\exists i\geq 1,\  \phi_i>_{K\cup\{x\}}m_i\}.
\end{eqnarray*}
The events $\{\phi_i>_{K\cup\{x\}}m_i\}$, $i\geq 1$, are pairwise disjoint so that
\[
1_{\{K\subset C(x)\}}=\sum_{i\geq 1}1_{\{\phi_i>_{K\cup\{x\}}m_i\}}\quad  \mbox{a.s.}
\]
Hence, we obtain
\[
\bbP[K\subset C(x)]=\bbE\Big[\sum_{i\geq 1}1_{\{\phi_i>_{K\cup\{x\}}m_i\}}\Big].
\]
This expectation can be computed thanks to the Slivnyak--Mecke  formula (see, e.g.,\ Schneider and Weil~\cite[page~68]{schw08}). Recall from~\eqref{eq:phi} and~\eqref{eq:mu} that $\Phi=\{\phi_i, i\geq 1\}$ is a Poisson point process with intensity $\mu$ and
that $m_i$ is a functional of $\Phi\setminus \{\phi_i\}$. The Slivnyak--Mecke formula implies that
\begin{eqnarray*}
\bbP[K\subset C(x)]&=&\int_{\cF_0} \bbE\left[1_{\{f>_{K\cup\{x\}}\eta\}}\right]\mu(\mathrm{d}f).
\end{eqnarray*}
Using  \eqref{eq:mu}, we compute
\begin{eqnarray*}
 \int_{\cF_0} \bbE\left[1_{\{f>_{K\cup\{x\}}\eta\}}\right]\mu(\mathrm{d}f)
&=& \int_0^\infty \bbE\left[1_{\{uY>_{K\cup\{x\}}\eta\}}\right]u^{-2}\mathrm{d}u\\
&=& \bbE\left[\int_0^{\infty} 1_{\{u>\sup_{K\cup\{x\}}\eta/Y\}}u^{-2}\mathrm{d}u\right]\\
&=& \bbE\left[\inf_{K\cup \{x\}} Y/\eta\right].
\end{eqnarray*}
This proves  \eqref{eq:theo1.1}.

In the same spirit as in the proof of \eqref{eq:theo1.1}, we have
\[
\{C(x)\subset K\}=\{\exists i\geq 1,\ \phi_i(x)>m_i(x) \mbox{ and }  \phi_i<_{K^c}m_i\}
\]
whence we deduce
\[
\bbP[C(x)\subset K]= \bbE\Big[\sum_{i\geq 1}1_{\{\phi_i(x)>m_i(x)\}}1_{\{\phi_i<_{K^c}m_i\}}\Big].
\]
We obtain  \eqref{eq:theo1.2} thanks to the Slivnyak--Mecke and straightforward computations.
\end{proof}

\subsection{Proof of Corollaries~\ref{cor1.1} and~\ref{cor1.2} }
\begin{proof}[Proof of Corollary~\ref{cor1.1}]
By Fubini's Theorem, the expected volume of the cell $C(x)$ is equal to
\[
\bbE[\mathrm{Vol}(C(x))]=\bbE\left[\int_{\mathcal{X}}1_{\{y\in C(x)\}}\lambda(\mathrm{d}y)\right]=\int_{\mathcal{X}}\bbP[y\in C(x)]\lambda(\mathrm{d}y)
\]
and, according to Theorem \ref{theo1},
\[
\bbP[y\in C(x)]=\bbE\left[\frac{Y(x)}{\eta(x)}\wedge \frac{Y(y)}{\eta(y)}\right].
\]
\end{proof}

\begin{proof}[Proof of Corollary~\ref{cor1.2}]
For $n\geq 1$, we recall that $B_n=[-n,n]^d\cap\cX$. The sequence of events $\{C(x)\subset B_n\}$, $n\geq 1$, is non-decreasing and we have
\[
\{C(x)\mbox{  bounded}\}=\bigcup_{n\geq 1}\{C(x)\subset B_n\},
\]
whence
\[
\bbP[ C(x)\mbox{ bounded}]=\lim_{n\to\infty} \bbP[C(x)\subset B_n].
\]
Using  \eqref{eq:theo1.2}, we get
\[
\bbP[C(x)\subset B_n]=\bbE\left[\left(\frac{Y(x)}{\eta(x)}- \sup_{B_n^c}\frac{Y}{\eta}\right)^+\right].
\]
As $n\to+\infty$, the sequence $\sup_{B_n^c}Y/\eta$ decreases to $\limsup_{y\to\infty}Y(y)/\eta(y)$. The monotone convergence theorem entails that
\[
\lim_{n\to\infty}\bbE\left[\left(\frac{Y(x)}{\eta(x)}- \sup_{B_n^c}\frac{Y}{\eta}\right)^+\right]=\bbE\left[\left(\frac{Y(x)}{\eta(x)}- \limsup_{y\to\infty}\frac{Y(y)}{\eta(y)}\right)^+\right],
\]
whence we deduce
\[
\bbP[ C(x)\mbox{ bounded}]=\bbE\left[\left(\frac{Y(x)}{\eta(x)}- \limsup_{y\to\infty}\frac{Y(y)}{\eta(y)}\right)^+\right].
\]
In order to prove the equivalence of the statements (i) and (ii), we note that
\[
0\leq \left(\frac{Y(x)}{\eta(x)}- \limsup_{y\to\infty}\frac{Y(y)}{\eta(y)}\right)^+\leq  \frac{Y(x)}{\eta(x)}.
\]
Note also that $\bbE[Y(x)/\eta(x)]=1$ since  $Y(x)$ is independent of $1/\eta(x)\sim \mbox{Exp}(1)$. Using the fact that $(a-b)^+=a$ (for $a,b\geq 0$) if and only if $a=0$ or $b=0$, we can deduce that the equality
\[
\bbE\left[\left(\frac{Y(x)}{\eta(x)}- \limsup_{y\to\infty}\frac{Y(y)}{\eta(y)}\right)^+\right]=1
\]
occurs if and only if $\limsup_{y\to\infty}Y(y)/\eta(y)=0$\ a.e.\ on the event $\{Y(x) \neq  0\}$.
This proves the equivalence of (i) and (ii).
\end{proof}

\section{Proofs related to Section 3}

\subsection{Proofs of Propositions~\ref{prop1} and ~\ref{prop2}}
\begin{proof}[Proof of Proposition~\ref{prop1}]
According to Theorem \ref{thm:SKS}, $\eta$ is ergodic if and only if
\begin{equation}\label{eq:ergodic}
\lim_{r\to +\infty} \frac{1}{\lambda(B_r)}\int_{B_r}(2-\theta(h))\lambda(\mathrm{d}h)=0,
\end{equation}
and  $\eta$ is mixing if and only if
\begin{equation}\label{eq:mixing}
\lim_{h\to \infty}(2-\theta(h))=0.
\end{equation}
Clearly, in view of the inequalities \eqref{eq:comparison},  \eqref{eq:ergodic} is equivalent  to
\[
\lim_{r\to+\infty} \frac{1}{\lambda(B_r)}\int_{B_r}\bbP[h\in C(0)]\lambda(\mathrm{d}h)= \lim_{r\to+\infty}\bbE\Big[\frac{\lambda(C(0)\cap B_r)}{\lambda(B_r)}\Big]  = 0
\]
and  \eqref{eq:mixing} is equivalent  to $\lim_{h\to \infty}\bbP[h\in C(0)]=0$.
\end{proof}

\begin{proof}[Proof of Proposition~\ref{prop2}]
We use here an upper bound for the $\beta$-mixing coefficient provided by Dombry and Eyi-Minko \cite[Theorem 3.1]{DEM12}: the $\beta$-mixing coefficient $\beta(S_1,S_2)$ defined by  \eqref{eq:beta} satisfies
\[
\beta(S_1,S_2)\leq 2\bbP[A(S_1,S_2)],
\]
where $A(S_1,S_2)$ denotes the event
\[
\{\exists i\geq 1,\ \exists (s_1,s_2)\in S_1\times S_2,\ U_iY_i(s_1)=\eta(s_1) \ \mbox{and}\ U_iY_i(s_2)=\eta(s_2)\}.
\]
Introducing the cells $C(s_1)$ with $s_1\in S_1$, we have
\begin{eqnarray*}
A(S_1,S_2)&=&\{\exists (s_1,s_2)\in S_1\times S_2,\ s_2\in C(s_1)\}\\
&=& \{ \cup_{s_1\in S_1} C(s_1) \cap S_2 \neq \emptyset\}
\end{eqnarray*}
and
\[
\beta(S_1,S_2)\leq 2\bbP[ \cup_{s_1\in S_1} C(s_1) \cap S_2 \neq \emptyset].
\]
We deduce that, for all compact set $K\subset \mathcal{X}$ and for all $r>0$,
\[
\beta_r(K) \leq  2\bbP[\exists x\in \mathcal{X},\ d(x,K)\geq r \ \mbox{and}\ x\in \cup_{s\in K}C(s)].
\]
We prove below that if $C(0)$ is bounded a.s., then so is $\cup_{s\in K}C(s)$, whence the right-hand side in the above inequality converges to $0$ (by the dominated convergence theorem),
and $\lim_{r\to\infty} \beta_r(K)=0$.\\
Suppose now that $C(0)$ is bounded a.s. In the discrete case $\mathcal{X}=\bbZ^d$, the compact set $K$ is finite and $\cup_{s\in K} C(s)$ is a.s.\ bounded as a finite union of bounded sets.
In the continuous case $\mathcal{X}=\bbR^d$, $K$ may be infinite but it is known that there are a.s.\ only  finitely many indices $i\geq 1$ such that $U_iY_i(s)=\eta(s)$ for some $s\in K$ (see Dombry and Eyi-Minko \cite[Proposition 2.3]{DEM13}). Hence, we can extract a finite covering $\cup_{s\in K} C(s)=\cup_{j=1}^k C(s_j)$ and $\cup_{s\in K} C(s)$ is a.s.\ bounded as a finite union of bounded sets.
\end{proof}

\subsection{Proof of Lemma \ref{lem:dec}}
\begin{proof}[Proof of Lemma \ref{lem:dec}]
By the uniqueness of the max-L\'evy measure, the max-stable process $\eta$ is stationary if and only if its max-L\'evy measure $\mu$ is stationary.
By the properties of Poisson point processes, $\Phi\cap \mathcal{C}_i$, $i=1,2$, are independent Poisson point processes with intensity measures
$\mathrm{d}\mu_i=1_{\mathcal{C}_i}\mathrm{d}\mu$. The max-stable processes $\eta_1$ and $\eta_2$  are hence independent with exponent measures $\mu_1$ and  $\mu_2$, respectively.
Since the cone $\mathcal{C}_i$ is shift-invariant, so is the measure $\mu_i$. Hence, the process $\eta_i$ is stationary.
Finally, the distribution of $\eta_i$ is characterized by the max-L\'evy measure $\mathrm{d}\mu_i=1_{\mathcal{C}_i}\mathrm{d}\mu$ and does not depend on the representation \eqref{eq:deHaan}.
\end{proof}

\subsection{Proofs of Theorem~\ref{theo2} and Corollary~\ref{cor2}}
In the next lemma, we gather some preliminary computations needed for the proof of Theorem~\ref{theo2}.
\begin{lemma}\label{lem1} Let $x\in\cX$. We have:
\begin{itemize}
\item[i)] $\alpha_C=\bbE[ Y(x)1_{\{Y\in\cF_C\}}]$ and $\alpha_D=\bbE[ Y(x)1_{\{Y\in\cF_D\}}]$,
\item[ii)] $\bbP[\eta_C(x)>\eta_D(x)]=\alpha_C$ and $\bbP[\eta_D(x)>\eta_C(x)]=\alpha_D$,
\item[iii)] $\bbP[C(x)\ \mbox{is bounded},\ \eta_C(x)>\eta_D(x) ]=\bbE\left[ \left( \frac{Y(x)}{\eta(x)} - \limsup_{ \infty}\frac{Y}{\eta}\right)^+1_{\{Y\in\cF_C\}}\right]$,
\item[iv)] $\bbP[C(x)\ \mbox{is bounded},\  \eta_D(x)>\eta_C(x) ]=\bbE\left[ \left( \frac{Y(x)}{\eta(x)} - \limsup_{ \infty}\frac{Y}{\eta}\right)^+1_{\{Y\in\cF_D\}}\right]$.
\end{itemize}
\end{lemma}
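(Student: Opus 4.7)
The plan is to prove (i) and (ii) by direct Poisson-point-process manipulations and to deduce (iii)--(iv) by adapting the Slivnyak--Mecke argument from the proofs of Theorem~\ref{theo1} and Corollary~\ref{cor1.2}.

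For (i), $\eta_C(x)$ is the supremum of the image point process of the marked PPP $\{(U_i, Y_i)\}$ under $(u,y) \mapsto u\, y(x) 1_{\cF_C}(y)$. A direct Laplace-functional computation yields
\[
\bbP[\eta_C(x) \le z] = \exp\left(-\int_0^\infty \bbP[u Y(x) > z,\, Y \in \cF_C]\, u^{-2} \mathrm{d}u\right) = \exp\left(-\bbE[Y(x) 1_{\{Y \in \cF_C\}}]/z\right),
\]
so $\alpha_C = \bbE[Y(x) 1_{\{Y \in \cF_C\}}]$; the same argument yields $\alpha_D = \bbE[Y(x) 1_{\{Y \in \cF_D\}}]$. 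Since $\cF_0 = \cF_C \sqcup \cF_D$ (because $\limsup_\infty f = 0$ is equivalent to $\lim_\infty f = 0$ for non-negative $f$), summing gives $\alpha_C + \alpha_D = \bbE[Y(x)] = 1$. For (ii), the same transformation-theorem computation applied to $(u,y) \mapsto (u y(x),\, 1_{\cF_C}(y))$ shows that the image is a PPP on $(0,\infty) \times \{0,1\}$ whose intensity factorizes as $v^{-2} \mathrm{d}v \otimes (\alpha_C \delta_1 + \alpha_D \delta_0)$; the marks are therefore i.i.d.\ Bernoulli($\alpha_C$), independent of the point positions, so the largest point (which realizes $\eta(x)$) carries mark $1$ with probability $\alpha_C$, giving $\bbP[\eta_C(x) > \eta_D(x)] = \alpha_C$, and symmetrically $\bbP[\eta_D(x) > \eta_C(x)] = \alpha_D$.

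For (iii)--(iv), the almost surely unique index $i^\star$ with $\phi_{i^\star}(x) = \eta(x)$ satisfies $\eta_C(x) > \eta_D(x)$ iff $Y_{i^\star} \in \cF_C$, which, since $\cF_C$ is a cone, is equivalent to $\phi_{i^\star} \in \cF_C$. Mimicking the proof of \eqref{eq:theo1.2} with the additional indicator $1_{\cF_C}(\phi_i)$ carried along, write the disjoint decomposition
\[
\{C(x) \subset B_n\} \cap \{\eta_C(x) > \eta_D(x)\} = \bigcup_{i \ge 1} \{\phi_i \in \cF_C,\ \phi_i(x) > m_i(x),\ \phi_i <_{B_n^c} m_i\},
\]
and apply Slivnyak--Mecke to $\Phi$ exactly as in Section~4. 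Using~\eqref{eq:mu} and cone-invariance, this gives
\[
\bbP[C(x) \subset B_n,\ \eta_C(x) > \eta_D(x)] = \bbE\left[\left(\frac{Y(x)}{\eta(x)} - \sup_{y \in B_n^c}\frac{Y(y)}{\eta(y)}\right)^+ 1_{\{Y \in \cF_C\}}\right].
\]
Letting $n \to \infty$, the left-hand side converges to $\bbP[C(x)\text{ bounded},\ \eta_C(x) > \eta_D(x)]$; on the right-hand side $\sup_{B_n^c} Y/\eta$ decreases to $\limsup_{y \to \infty} Y(y)/\eta(y)$, so the integrand is non-decreasing in $n$ and monotone convergence yields (iii). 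Statement (iv) follows by replacing $\cF_C$ with $\cF_D$ throughout.

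The only non-routine point is the cone-invariance observation $\{Y_i \in \cF_C\} = \{\phi_i \in \cF_C\}$ (and likewise for $\cF_D$), which allows the event of interest to be expressed purely in terms of the PPP $\Phi$ before invoking Slivnyak--Mecke; once this is noted, everything else is a verbatim adaptation of the computations already performed for Theorem~\ref{theo1} and Corollary~\ref{cor1.2}.
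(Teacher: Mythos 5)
Your proof is correct, and parts (i), (iii) and (iv) follow the paper's argument essentially verbatim: the same Laplace-functional computation for (i), and for (iii)--(iv) the same conditional version of \eqref{eq:theo1.2} obtained by carrying the extra indicator through the Slivnyak--Mecke computation and then letting $K=B_n\uparrow\cX$ with monotone convergence, exactly as in Corollary~\ref{cor1.2}. Your explicit remark that $\{Y_i\in\cF_C\}=\{U_iY_i\in\cF_C\}$ by the cone property is precisely the point the paper uses implicitly when it writes the event in terms of $\phi_i\in\cF_C$ before invoking Slivnyak--Mecke. The only genuine divergence is in (ii): the paper uses the independence of $\eta_C(x)$ and $\eta_D(x)$ (inherited from Lemma~\ref{lem:dec}) together with their Fr\'echet laws \eqref{eq:defalpha} and computes $\bbP[\eta_C(x)>\eta_D(x)]=\bbE[\exp(-\alpha_D/\eta_C(x))]=\int_0^{\infty}\eee^{-\alpha_D/u}\,\mathrm{d}(\eee^{-\alpha_C/u})=\alpha_C$, whereas you push the point process forward to $(0,\infty)\times\{0,1\}$, check that the intensity factorizes as $v^{-2}\mathrm{d}v\otimes(\alpha_C\delta_1+\alpha_D\delta_0)$, and read off the answer from the fact that the a.s.\ unique largest point carries an independent Bernoulli$(\alpha_C)$ mark. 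The two arguments encode the same structural fact about independent thinning of a Poisson process; yours is slightly more self-contained (it never needs the explicit Fr\'echet distribution functions, only the factorization of the intensity), while the paper's is a one-line integral once (i) and $\alpha_C+\alpha_D=1$ are in hand. Both correctly rely on the a.s.\ uniqueness of the maximizing point to exclude the tie $\eta_C(x)=\eta_D(x)$.
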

\begin{proof}[Proof of Lemma~\ref{lem1}]
\textit{i)}
From \eqref{eq:cons} we get
\begin{eqnarray*}
\bbP[\eta_C(x)\leq y]
&=&\bbP[ \vee_{i\geq 1} U_iY_i(x)1_{\{Y_i\in\cF_C\}} \leq y ]\\
&=& \exp\Big(-\int_0^\infty \bbP[uY(x)1_{\{Y\in\cF_C\}}> y] u^{-2}\mathrm{d}u\Big)\\
&=& \exp(-\bbE[Y(x)1_{\{Y\in\cF_C\}}]/y),
\end{eqnarray*}
whence we deduce that $\alpha_C=\bbE[Y(x)1_{\{Y\in\cF_C\}}]$. The formula for $\alpha_D$ is obtained in the same way.

\vspace*{2mm}
\noindent
\textit{ii)} The random variables $\eta_C(x)$ and $\eta_D(x)$ are independent and have Fr\'echet distribution with parameters $\alpha_C$ and $\alpha_D$, respectively. Hence,
\begin{eqnarray*}
\bbP[\eta_C(x)>\eta_D(x)]
&=&\bbE[\exp(-\alpha_D/\eta_C(x))]\\
&=&\int_0^{+\infty} \exp(-\alpha_D/u)\mathrm{d}(\eee^{-\alpha_C/u})\\
&=&\alpha_C.
\end{eqnarray*}
For the last equality, we use $\alpha_C+\alpha_D=1$. Similarly,
$ \bbP[\eta_D(x)>\eta_C(x)]=\alpha_D$.

\vspace*{2mm}
\noindent
\textit{iii)} This statement is a variation of Corollary~\ref{cor1.2} and we give only the main lines of its proof.
We first prove the following version of  \eqref{eq:theo1.2}: for all compact sets $K\subset \mathcal{X}$,
\begin{eqnarray}
&&\bbP[C(x)\subset K,\ \eta_C(x)>\eta_D(x)]\nonumber\\
&=&\bbE\left[ \left( \frac{Y(x)}{\eta(x)} - \sup_{y\in K^c}\frac{Y(y)}{\eta(y)}\right)^+1_{\{Y\in\cF_C\}}\right].\label{eq:theo1.2bis}
\end{eqnarray}
Indeed, with the same notation as in the proof of  \eqref{eq:theo1.2}, we have
\begin{eqnarray*}
&&\{C(x)\subset K,\ \eta_C(x)>\eta_D(x)\}\\
&=&\{\exists i\geq 1,\ \phi_i(x)>m_i(x),\quad  \ \phi_i<_{K^c}m_i\quad \mbox{and}\ \phi_i\in\cF_C\}
\end{eqnarray*}
and the Slivnyak--Mecke formula entails that
\begin{eqnarray*}
&&\bbP[C(x)\subset K,\ \eta_C(x)>\eta_D(x)]\\
&=& \bbE\left[\sum_{i\geq 1}1_{\{\phi_i(x)>m_i(x)\}}1_{\{\phi_i<_{K^c}m_i\}}1_{\{\phi_i\in\cF_C\}}\right]\\
&=& \int_{\mathcal{F}_0}\bbE\left[1_{\{f(x)>\eta(x)\}}1_{\{f<_{K^c}\eta\}}1_{\{f\in\cF_C\}}\right]\mu(\mathrm{d}f).
\end{eqnarray*}
With similar computations as in the proof of  \eqref{eq:theo1.2},  \eqref{eq:theo1.2bis} is easily deduced. Then statement iii) follows from  \eqref{eq:theo1.2bis}
exactly in the same way as Corollary~\ref{cor1.2} follows from  \eqref{eq:theo1.2}.

\vspace*{2mm}
\noindent
\textit{iv)} The proof is similar and is omitted.
\end{proof}

\begin{proof}[Proof of Theorem~\ref{theo2}]
 Since $\{\eta_D(x)=\eta_C(x)\}$ is a null set, it suffices to prove the following two inclusions (modulo null sets):
\begin{eqnarray}
&&\{\eta_D(x)>\eta_C(x)\}\subset \{C(x)\ \mbox{is bounded}\}, \label{eq:proba1}\\
&&\{\eta_C(x)>\eta_D(x)\}\subset \{C(x)\ \mbox{is unbounded}\}. \label{eq:proba2}
\end{eqnarray}

\vspace*{2mm}
\noindent
\textit{Proof of  \eqref{eq:proba1}.}
We first reduce the proof of \eqref{eq:proba1} to the proof of
\begin{equation}\label{eq:proba1ter}
\lim_{y\to \infty}\frac{Y(y)}{\eta(y)}1_{\{Y\in\cF_D\}}=0\quad \mbox{a.s.}
\end{equation}
Indeed,  \eqref{eq:proba1ter} and statements i), ii) and iv) of Lemma~\ref{lem1} entail that
\begin{eqnarray*}
&&\bbP[C(x)\ \mbox{is bounded},\ \eta_D(x)>\eta_C(x)]\\
&=&\bbE\left[ \left( \frac{Y(x)}{\eta(x)} - \limsup_{\infty}\frac{Y}{\eta}\right)^+1_{\{Y\in\cF_D\}}\right]\\
&=& \bbE\left[ \frac{Y(x)}{\eta(x)} 1_{\{Y\in\cF_D\}}\right]\\
&=& \alpha_D\\
&=& \bbP[\eta_D(x)>\eta_C(x)],
\end{eqnarray*}
and we deduce  \eqref{eq:proba1}.

It remains to prove  \eqref{eq:proba1ter}.
Statements i) and iii) of Lemma \ref{lemBR}  imply that $Y1_{\{Y\in\cF_D\}}$ is Brown--Resnick stationary and such that
\[
\int_{\mathcal{X}} \sup_{y\in K} Y(x+y)1_{\{Y\in\cF_D\}}\lambda(\mathrm{d}x) < \infty \quad \mbox{a.s.}
\]
On the other hand, let us consider the process $Z=\frac{Y}{\eta}1_{\{Y\in\cF_D\}}$. Since $Y$ and  $1/\eta$ are Brown--Resnick stationary  and since the cone $\cF_D$ is shift invariant,
statement i) and ii) of Lemma \ref{lemBR} imply that $Z=\frac{Y}{\eta}1_{\{Y\in\cF_D\}}$ is Brown--Resnick stationary.
Furthermore, for any compact set $K\subset \cX$,
\begin{eqnarray*}
&&\bbE\left[\int_\mathcal{X} \sup_{y\in K}Z(x+y)\lambda(\mathrm{d}x)\ \Big|\ Y \right]\\
&\leq& \bbE\left[\int_\mathcal{X} \frac{\sup_{y\in K}Y(x+y)}{\inf_{y\in K}\eta(x+y)}1_{\{Y\in\cF_D\}}\lambda(\mathrm{d}x)\ \Big|\ Y \right]\\
&=& \bbE\left[\sup_{y\in K}\eta^{-1}(y) \right]   \int_{\mathcal{X}} \sup_{y\in K}Y(x+y)1_{\{Y\in\cF_D\}}\lambda(\mathrm{d}x)<\infty \quad \mbox{a.s.}
\end{eqnarray*}
In the last equation, we used the independence of $Y$ and $\eta$, the stationarity of $\eta$ and the fact that $\bbE\left[\sup_{y\in K}\eta^{-1}(y)\right]<\infty$ (see Dombry and Eyi-Minko \cite[Theorem 2.2]{DEM12}).
As a consequence,
\[
\int_\mathcal{X} \sup_{y\in K}Z(x+y)\lambda(\mathrm{d}x)<\infty\quad \mbox{a.s.}
\]
and Lemma \ref{lemBR}-iii) implies that $\lim_{x\to\infty} Z(x)=0$ a.s., thus proving \eqref{eq:proba1ter}.

\vspace*{2mm}
\noindent
\textit{Proof of  \eqref{eq:proba2}.}
We consider the shift-invariant cone
\[
\cF_L=\left\{f\in\cF_0; \sup_{\cX} f>\limsup_\infty f\right\}.
\]
We will  prove that the process $Z=\frac{Y}{\eta}1_{\{Y\in\cF_C\}}$ is Brown--Resnick stationary and satisfies
\begin{equation}\label{eq:proba2bis}
\bbP[Z\in\cF_L ]=0.
\end{equation}
After this has been done,  \eqref{eq:proba2} can be deduced as follows:  \eqref{eq:proba2bis} implies that
\[
\frac{Y(x)}{\eta(x)}1_{\{Y\in\cF_C\}}\leq \sup_\mathcal{X} \left(\frac{Y}{\eta}1_{\{Y\in\cF_C\}}\right) \leq\left( \limsup_{\infty}\frac{Y}{\eta}1_{\{Y\in\cF_C\}}\right)\quad \mathrm{a.s.},
\]
whence
\[
\left( \frac{Y(x)}{\eta(x)} - \limsup_{\infty}\frac{Y}{\eta}\right)^+1_{\{Y\in\cF_C\}}=0\quad \mathrm{a.s.}
\]
According to Lemma~\ref{lem1}, statement iii), we obtain that
\begin{eqnarray*}
&&\bbP[C(x)\ \mbox{is bounded},\ \eta_C(x)>\eta_D(x) ]\\
&=&\bbE\left[\left( \frac{Y(x)}{\eta(x)} - \limsup_{\infty}\frac{Y}{\eta}\right)^+1_{\{Y\in\cF_C\}}\right]\\
&=&0,
\end{eqnarray*}
and this implies  \eqref{eq:proba2}.

We now consider  \eqref{eq:proba2bis}. Statements i) and ii) of Lemma \ref{lemBR} imply that the process $Z$ is Brown--Resnick stationary.
Lemma \ref{lemBR}-iv) entails that
$\bbP[Z\in\cF_L]\leq \bbP[ Z\in\cF_D]$.
So, it suffices to prove that $\bbP[Z\in\cF_D]=0$. Suppose by contradiction that $\bbP[Z\in\cF_D]>0$. Recalling that $Z=\frac{Y}{\eta}1_{\{Y\in\cF_C\}}$, we see that
\[
\{Z\in \cF_D\}=\{Y\in\cF_C\}\cap \{Y/\eta\in \cF_D\}.
\]
On  the set $\{Y\in \cF_C\}=\{\limsup_\infty Y>0\}$, one can construct a $\sigma(Y)$-measurable random sequence $x_n\to \infty$ such that
$Y(x_n)\geq \frac{1}{2}\limsup_\infty Y>0$.
Then, on  $\{Z\in \cF_D\}\subset \{Y/\eta\in \cF_D\}=\{\lim_\infty Y/\eta=0\}$, we have necessarily $\eta(x_n)\to +\infty$. But $\eta$ is stationary and independent of $Y$, so that $\eta(x_n)$
has a unit Fr\'echet distribution that does not depend on $n$. This leads to a contradiction and  we must hence have  $\bbP[Z\in\cF_D]=0$. This concludes the proof of
\eqref{eq:proba2bis}.
\end{proof}

\begin{proof}[Proof of Corollary \ref{cor2}]
Theorem \ref{theo2} and Lemma \ref{lem1}-ii) together yield
\[
 \bbP[C(x)\ \mbox{is unbounded}]=\bbP[\eta_C(x)>\eta_D(x)]=\alpha_C,
\]
proving statement i). Statement ii) is proved similarly. Furthermore, $\eta$ is purely dissipative if $\eta_C=0$, which is  equivalent to $\alpha_C=0$.
We deduce easily that  $\eta$ is purely dissipative if and only if $C(x)$ is bounded a.s.\ and this proves  iii).
The proof of  iv) is similar.
\end{proof}

\subsection{Proofs of Theorem~\ref{theo3} and Corollary~\ref{cor3}}
\begin{proof}[Proof of Theorem~\ref{theo3}]
It suffices to prove the following two inclusions (modulo null sets):
\begin{equation}\label{eq:inclusion1bis}
\{\eta_N(x)>\eta_P(x)\}\subset \{ \delta^-(C(x))=0\}
\end{equation}
and
\begin{equation}\label{eq:inclusion2bis}
\{\eta_P(x)>\eta_N(x)\}\subset\{\delta(C(x))>0\}.
\end{equation}

\vspace*{2mm}
\noindent
\textit{Proof of \eqref{eq:inclusion1bis}.}
Let us consider the  cell of $x$ with respect to the null component only. It is defined by
\[
C_N(x)=\{y\in \mathcal{X};\ \exists i\geq 1,\ Y_i\in\cF_N,\ U_iY_i(x)=\eta_N(x),\  U_iY_i(y)=\eta_N(y)\}.
\]
Clearly, $\eta_N(x)>\eta_P(x)$ implies that $C(x)\subset C_N(x)$. We will prove that $\delta^-(C_N(x))=0$ on $\{\eta_N(x)>\eta_P(x)\}$ and this
implies  \eqref{eq:inclusion1bis}.

We can suppose without loss of generality that $\eta=\eta_N$ is generated by a null flow and prove that the lower asymptotic density of $C(x)=C_N(x)$ is equal to zero.
According to Wang {\it et al.} \cite[Theorem~4.1]{WRS13} or Kabluchko~\cite[Theorem~8]{K09}, max-stable random fields generated by null flows are ergodic, whence Proposition~\ref{prop1} implies
\[
\bbE\Big[\frac{\lambda(C(0)\cap B_r)}{\lambda(B_r)}\Big]\to 0\quad \mbox{as}\ r\to\infty.
\]
This  implies the convergence in probability
\[
\frac{\lambda(C(0)\cap B_r)}{\lambda(B_r)}\stackrel{\bbP}\longrightarrow 0,\quad \mbox{as}\ r\to +\infty
\]
and hence almost sure converge to $0$ along a subsequence. We deduce that $\delta^-(C(0))=0$ almost surely and, by stationarity, the same holds true for $C(x)$, $x\in \mathcal{X}$.

\vspace*{2mm}
\noindent
\textit{Proof of Equation \eqref{eq:inclusion2bis}.}
Possibly changing representation \eqref{eq:deHaan}, we may suppose without loss of generality that for any $i\geq 1$, the random process
$\tilde Y_i=Y_i1_{\{Y_i\in P\}}$ is stationary.  We consider the cells
\[
\tilde C_i=\{y\in \mathcal{X},\ U_i\tilde Y_i(y)=\eta(y)\}, \quad i\geq 1.
\]
We will prove below that for every $i\geq 1$ with probability one,
\begin{equation}\label{eq:positivecell}
\mbox{either } \delta(\tilde C_i)>0 \mbox{ or } \lambda(\tilde C_i)=0.
\end{equation}
We show that this implies \eqref{eq:inclusion2bis}. On the event $\{\eta_P(x)>\eta_N(x)\}$, there is a random index $i(x)$ such that $C(x)=\tilde C_{i(x)}$. Furthermore, since $x\in C(x)$, we have $\lambda(\tilde C_{i(x)})>0$ (this is clear in the discrete case, in the continuous case, $C(x)$ contains a neighborhood of $x$). According to \eqref{eq:positivecell}, we obtain $\delta(C_{i(x)})=\delta(C_{x})>0$, proving   \eqref{eq:inclusion2bis}.

It remains to prove  \eqref{eq:positivecell}.
Recall that the $U_i$'s are arranged in the decreasing order. Fix $i\geq 1$  and observe that the distribution of $(U_i,\tilde Y_i,\eta)$ is invariant under the shift
\[
T_x(u,f_1,f_2)=(u,f_1(\cdot+x),f_2(\cdot+x)),\quad u>0,\ f_1,f_2\in\cF_0.
\]
Then we observe that
\begin{eqnarray*}
\frac{\lambda(\tilde C_i\cap B_r)}{\lambda(B_r)}&=&\frac{1}{\lambda(B_r)}\int_{B_r}1_{\{x\in \tilde C_i\}}\lambda(\mathrm{d}x)\\
&=&\frac{1}{\lambda(B_r)}\int_{B_r}1_{\{U_i\tilde Y_i(x)=\eta(x)\}}\lambda(\mathrm{d}x)\\
&=&\frac{1}{\lambda(B_r)}\int_{B_r}1_{\{T_x(U_i,\tilde Y_i,\eta)\in A\}}\lambda(\mathrm{d}x)
\end{eqnarray*}
with $A=\{(u,f_1,f_2); uf_1(0)=f_2(0)\}$. We can then apply the multiparameter ergodic theorem (see, e.g., \cite[Theorem 2.8]{WRS13}) and conclude that
\[
\lim_{r\to +\infty} \frac{\lambda(\tilde C_i\cap B_r)}{\lambda(B_r)}=\bbE[1_A(U_i,\tilde Y_i,\eta)\mid \cI] \quad \mbox{a.s.},
\]
where $\cI$ denotes the  $\sigma$-algebra of shift-invariant sets. This shows that $\tilde C_i$ has an  asymptotic density,
\[
\delta(\tilde C_i)=\bbE[1_{\{0\in\tilde C_i\}}\mid \cI]\quad \mbox{a.s.}
\]
Furthermore, we observe that  shift-invariance implies that
\[
\bbE[1_{\{0\in\tilde C_i\}}\mid \cI]=\bbE[1_{\{x\in\tilde C_i\}}\mid \cI],\quad x\in \mathcal{X}.
\]
Using the fact that $\{\delta(\tilde C_i)=0\}\in \cI$, we deduce that
\begin{eqnarray*}
\bbE[\lambda(\tilde C_i) 1_{\{\delta(\tilde C_i)=0\}}\mid \cI]&=& 1_{\{\delta(\tilde C_i)=0\}} \int_{\mathcal{X}}\bbE[1_{\{x\in \tilde C_i\}}\mid \cI]\, \lambda(\mathrm{d}x)\\
&=& 0.
\end{eqnarray*}
Taking the expectation, we obtain that
\[
\bbE[\lambda(\tilde C_i) 1_{\{\delta(\tilde C_i)=0\}}]=0
\]
and we conclude that $\lambda(\tilde C_i)=0$ on the event $\{\delta(\tilde C_i)=0\}$, proving \eqref{eq:positivecell}.
\end{proof}

\begin{proof}[Proof of Corollary~\ref{cor3}]
For the sake of brevity, we omit the proof which is quite straightforward from Theorem~\ref{theo3} and very similar to the proof of Corollary~\ref{cor2}.
\end{proof}

\section*{Acknowledgements}
The authors are grateful to the editor, the associate editor and two referees for their helpful suggestions.

\appendix
\section{Proof of Lemma \ref{lemBR} iv)}
\begin{proof}[Proof of Lemma \ref{lemBR} iv)]
To check that $\cF_L$ is localizable, take $L_1(f)=\argmax(f)$ and $L_2(f)=\max(f)$ in the definition of a localizable cone (note that we are working with continuous functions so that the supremum is a maximum).

For the proof of the inclusion $\{Y\in\mathcal{F}_L\}\subset \{\lim_{x\to \infty} Y(x)=0\}$,
we prove that $\eta_L=\vee_{i\geq 1} U_iY_i 1_{\{Y_i\in \mathcal{F}_L\}}$ admits a mixed moving maximum representation.
According to \cite[Proposition 10]{DK15}, this implies that $Y_L\in \tilde{\mathcal{F}_D}$ almost surely and hence the inclusion
$\{Y\in\mathcal{F}_L\}\subset \{\lim_{x\to\infty} Y(x)=0\}$ modulo null sets.
For simplicity, we omit the subscript $L$ and assume that $Y\in\mathcal{F}_L$ almost surely. We prove that $\eta=\vee_{i\geq 1} U_iY_i$ admits a mixed moving maximum representation. In fact, the proof works if $\cF_L$ is replaced by any localizable cone.  We follow the proof of Theorem 14 in Kabluchko et al.\ \cite{KSdH09} and we sketch only the main lines. We  introduce the random variables
\[
 X_i=\mathop{\mathrm{argmax}}_{x\in\cX}Y_i(x),\quad Z_i(\cdot)=\frac{Y_i(X_i+\cdot)}{\max_{x\in\cX} Y_i (x)},\quad V_i=U_i\max_{x\in\cX} Y_i (x).
\]
Note that $X_i$ is well-defined because of the definition of $\cF_L$. If the maximum is attained at several points, we take the lexicographically smallest one. Clearly, we have $U_iY_i(x)=V_iZ_i(x-X_i)$ for all $x\in\cX$ so that
\[
\eta(x)=\bigvee_{i\geq 1}V_iZ_i(x-X_i).
\]
It remains to check that $(V_i,X_i,Z_i)_{i\geq 1}$  is a Poisson point process with  intensity measure
$u^{-2}\mathrm{d}u\lambda(\mathrm{d}x)Q(\mathrm{d}f)$, where $Q$ is a probability measure on $\cF_0$.
 Clearly, $(V_i,X_i,Z_i)_{i\geq 1}$ is a Poisson point process as the image of the original point process $(U_i,Y_i)_{i\geq 1}$. Its intensity  is the  image of the  intensity of the original point process. With a straightforward transposition of the arguments of \cite[Theorem 14]{KSdH09}, one can check that it has the required form.
\end{proof}

\bibliographystyle{plain}
\bibliography{Biblio}

\end{document}